 \newtheorem{theorem}{Theorem}[section]
 \newtheorem{lemma}[theorem]{Lemma}
 \theoremstyle{definition}
 \theoremstyle{remark}
 \newtheorem{rem}[theorem]{Remark}
 \newtheorem{example}{Example}
 \newtheorem{problem}{Problem}
 \numberwithin{equation}{section}
\begin{document}

%
%
%
%
%
%
%
%
%

\title[The number of inscribed and circumscribed graphs]
 {The number of inscribed and circumscribed\\ graphs of a convex polyhedron}

\author{Yagub N. Aliyev}

\address{%
School of IT and Engineering\\ 
ADA University\\
Ahmadbey Aghaoglu str. 61 \\
Baku 1008, Azerbaijan}

\email{yaliyev@ada.edu.az}

\thanks{This work was supported by ADA University Faculty Research and Development Fund.}

\subjclass{Primary 52B05, 51M20, 51M04; Secondary 51M15, 51M16, 52B10, 52B11, 52C05}

\keywords{Octahedron, graph, inscribed graph, polyhedron, porism, Poncelet type theorems, inscribed polygon, circumscribed polygon, Maclaurin's conic generation, Braikenridge's theorem, geometric inequality, polytopes.}

\date{January 1, 2004}

\begin{abstract}
In the paper we prove that the number of graphs inscribed into graph of a convex polyhedron and circumscribed around another graph does not exceed 4. For this we first studied Poncelet type problem about the number of convex $n$-gons inscribed into one convex $n$-gon and circumscribed around another convex $n$-gon. It is proved that their number is also at most 4. This contrasts with Poncelet type porisms where usually infinitude of such polygons is proved, provided that one such polygon already exists. An inequality involving ratio of lengths of line segments is used. Alternative way of using Maclaurin-Braikenridge's conic generation method is also discussed. Properties related to constructibility with straightedge and compass are also studied. A new proof, based on mathematical induction, of generalized Maclaurin- Braikenridge's theorem is given. We also gave examples of regular polygons and a polyhedron for which number 4 is realized.
\end{abstract}

\maketitle
\section{Introduction} Polyhedra can be defined in various generality as geometric objects consisted of vertices (points), edges (line segments), and faces (polygons). Regular (Platonic), semi-regular (Archimedean), regular star polyhedra (Kepler–Poinsot), and other types of polyhedra frequently appear in mathematics, biology, crystallography, physics, etc. Convex polyhedra which are defined as convex hull of finitely many points in space play important role in geometry and its applications. Some of these polyhedra are obtained by choosing its vertices on the edges of another polyhedron and this process can be repeated indefinitely. For example, midpoints of the edges of tetrahedron give the vertices of an octahedron, whose edge midpoints give the vertices of an icosahedron. The midpoints of icosahedron and dodecahedron both give the vertices of icosidodecahedrons. The literature about history, classification, and applications of these polyhedra is extensive (see \cite{crom} and its references). In the current paper a general result is proved about the graphs of such nested polyhedra.

Let $\alpha$ be a graph (vertices and edges) also called 1-skeleton (\cite{grun}, p. 138) of a convex polyhedron. On each edge of $\alpha$ choose a point which does not coincide with the vertices of $\alpha$. If the points on each polygonal face of $\alpha$ are connected to form a convex polygon, then these polygons together form a graph, which we denote by $\beta$ (see Figure 1). In particular, if the number of faces meeting at each vertex of $\alpha$ is 3 as in cube (see Figure 1), dodecahedron (see Figure 15), and tetrahedron (see Figure 13), then $\beta $ can also be interpreted as a convex polyhedron obtained by truncation of the vertices of the polyhedron of $\alpha$ (see \cite{coxeter1}, Chapter 8). This means that in this case the convex hull of the vertices of $\beta$ form a convex polyhedron whose edges coincide with the edges of $\beta$ (see Figure 1). But if the number of faces meeting at some vertex of $\alpha$ is greter than 3 as in octahedron (see Figure 2) and icosahedron (see Figure 16), then this interpretation is not possible, and therefore $\beta$ is just a graph.
We say $\beta$ is inscribed into $\alpha$ and similarly, $\alpha$ is circumscribed around $\beta$. Repeating this process for $\beta$, we obtain its inscribed graph $\gamma$. The vertices of $\gamma$ are chosen on the edges of $\beta$ one on each. Part of the edges of $\gamma$ form convex polygons on the faces of $\alpha$. The remaining part of the edges of $\gamma$ is not important to specify as they do not play any role in our considerations but for clarity we can assume them to connect only those 2 vertices of $\gamma$, which belong to 2 edges of $\beta$ having common vertex of $\beta$ and subtending angles on faces of $\alpha$ with common vertex of $\alpha$. At each vertex of $\beta$ 4 edges and 4 faces are meeting and therefore interpretation of $\gamma$ as a graph of a convex polyhedron obtained by truncation of the vertices of the polyhedron of $\beta$ is not always possible. Suppose now that graphs $\alpha$ and $\gamma$ are fixed. Is it possible to find graphs different from $\beta$ which are also inscribed into $\alpha$ and circumscribed around $\gamma$, and if yes, then what is the maximum of their number? We will answer the second question by proving that this number can not exceed 4 and give an example of a convex polyhedron for which 4 such graphs exist (see Figure 2).

\begin{figure}
\centering
\begin{minipage}{.5\textwidth}
  \centering
  \includegraphics[width=1\linewidth]{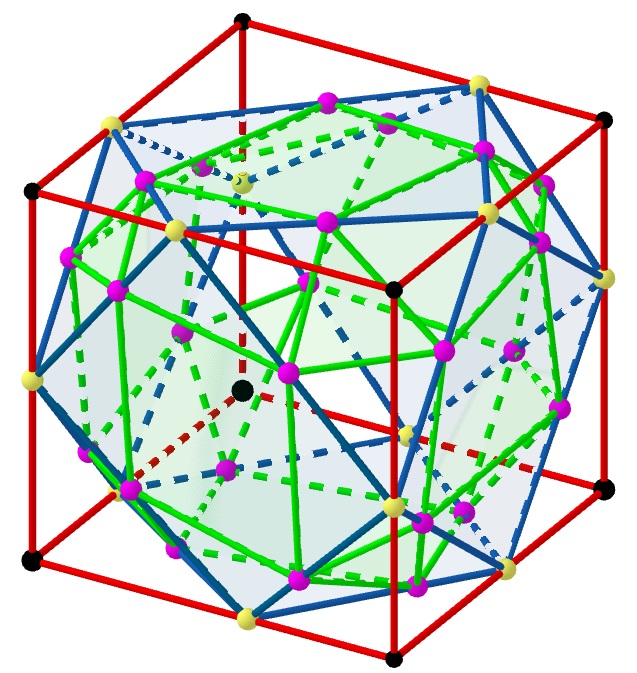}
  \captionof{figure}{Nested graphs of a convex polyhedron (\url{https://www.geogebra.org/3d/kuqsez2b}).}
  \label{fig1}
\end{minipage}%
\begin{minipage}{.5\textwidth}
  \centering
  \includegraphics[width=1\linewidth]{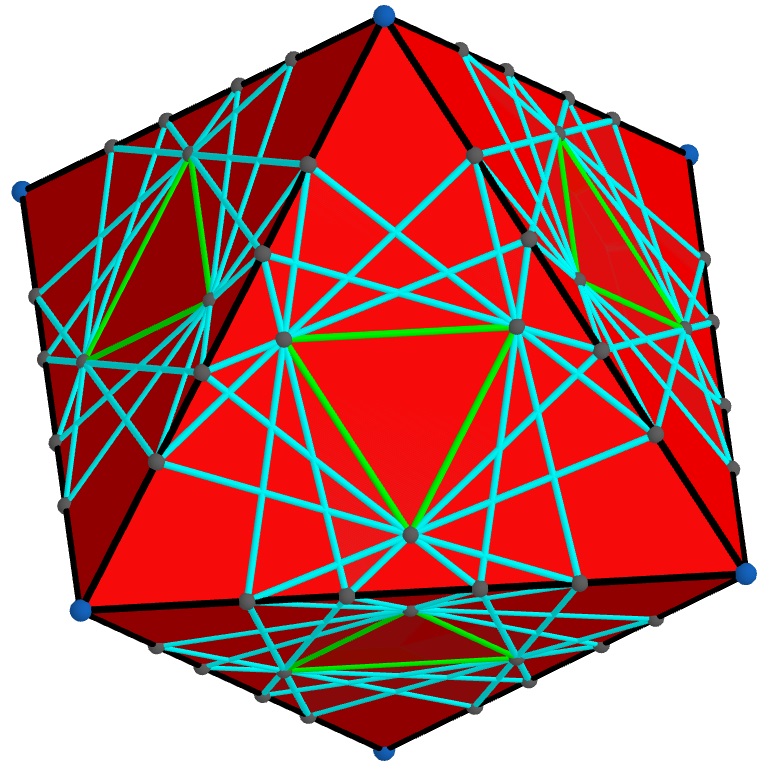}
  \captionof{figure}{Icosahedron with 4 inscribed graphs (\url{https://www.geogebra.org/3d/jwhddbeg}).}
  \label{fig2}
\end{minipage}
\end{figure} 

Note that all the edges of $\beta $ and all the vertices of $\gamma$ are on the faces of $\alpha$. This means that the polygonal face of $\beta$ inscribed into a face of $\alpha$ is also circumscibed around the polygonal face of $\gamma$ lying on $\alpha$. All these faces are convex polygons and therefore we need to first solve the problem for the plane which has its own history and it is discussed in the next section.

\section{Planar problem}
Poncelet in his introduction to famous porism theorem, studied momentarily the variable polygons having all but one of their vertices on sides of a given polygon and all the sides passing
through the vertices of another polygon. He shows that the free vertex describes a conic (\cite{poncelet}, Tome I, p. 308-314). Poncelet cites Brianchon \cite{brian}, who cites Maclaurin \cite{mac} and Braikenridge 
\cite{braik} for the case of triangles $(n = 3)$ (see \cite{mills} for the history). Poncelet started to work on these questions in 1813 when he was a prisoner of war in Russia.
In Poncelet porisms infinitude of polygons inscribed into one ellipse and and circumscribed around another ellipse, given that one such polygon exists, is proved \cite{poncelet}, \cite{poncelet2} (see also \cite{centina}, \cite{centina2}, \cite{halb}). This result inspired discovery of many other porisms commonly known as Poncelet type theorems. In Steiner porism infinitude of chains of tangent circles all of which are internally tangent to one circle and externally tangent to another circle, again provided that one such chain exists, is studied (see\cite{pedoe}, p.98; \cite{coxeter}, sect. 5.8). In Emch's porism Steiner's porism is generalized to chain of intersecting circles, where these intersections are on another circle \cite{emch} (see also \cite{avk}). In Zig-zag type porisms infinitude of equilateral polygons with vertices taken alternately on two circles (or lines), is studied \cite{bottema}, \cite{hras}, \cite{csikos}. Money-Coutts theorem (see \cite{tabach} and its references) is about chain of tangent circles alternately inscribed into angles of a triangle. Generalizations and connections of these results with each other, were studied in \cite{bauer}, \cite{protasov}, \cite{drag}. Similar generalizations for the space with rings of tangent spheres were discussed in \cite{koll}, \cite{coxeter0}. Surprisingly, the original configuration of Maclaurin about polygons inscribed into one polygon and circumscribed around another polygon did not attract much attention after these powerful generalizations. In the current section we will study in detail the case of convex polygons inscribed into one convex polygon and circumscribed around another one. The results are surprising as we will show that the maximal number of such polygons is not dependent on the number of sides of the polygons. At the end of Section 4 the case of generalized polygons, considered as collections of vertices or lines is also considered. Possibility of construction of such regular polygons with Euclidean instruments (straightedge and compass) is disscussed in Appendix A. This construction algorithm was very useful for drawing diagrams in the current paper all of which, except the last two, are created using the website of GeoGebra. For completeness we also provided an independent proof for generalized Maclaurin-Braikenridge's conic generation method in Appendix B. 

Suppose a convex polygon $A_0 A_1 A_2\ldots A_{n-1}$ $(n\ge 3$) is given (see Figure 3). Choose a point on each side of this polygon. Denote these points by $B_i\in A_i A_{i+1}$, where $B_i$ is between $A_i$ and $A_{i+1}$, and $B_i\ne A_i,A_{i+1}$ $(i=0,1,\ldots,n-1)$. For such polygons we say that $B_0 B_1 B_2\ldots B_{n-1}$ is inscribed into $A_0 A_1 A_2\ldots A_{n-1}$ and $A_0 A_1 A_2\ldots A_{n-1}$ is circumscribed around $B_0 B_1 B_2\ldots B_{n-1}$. In the same way choose on each side of polygon $B_0 B_1 B_2\ldots B_{n-1}$ a point $C_i\in B_i B_{i+1}$ ($C_i$ is between $B_i$ and $B_{i+1}$, and $C_i\ne B_i,B_{i+1}$ for $i=0,1,\ldots,n-1$) and draw polygon $C_0 C_1 C_2\dots C_{n-1}$. Now fix polygons $A_0 A_1 A_2\ldots A_{n-1}$ and $C_0 C_1 C_2\dots C_{n-1}$. We want to find the maximum of the number of polygons like $B_0 B_1 B_2\ldots B_{n-1}$, which are inscribed into $A_0 A_1 A_2\ldots A_{n-1}$ and circumscribed around $C_0 C_1 C_2\dots C_{n-1}$, possibly with the indices shifted so that $C_i\in B_{k+i} B_{k+i+1}$ $(i=0,1,\ldots,n-1)$ for some fixed $k$. We will prove that this number is 4. Our strategy is to give an example where number 4 is realized and then prove that this number can not exceed 4.

\begin{figure}
\centering
\begin{minipage}{.5\textwidth}
  \centering
  \includegraphics[width=.8\linewidth]{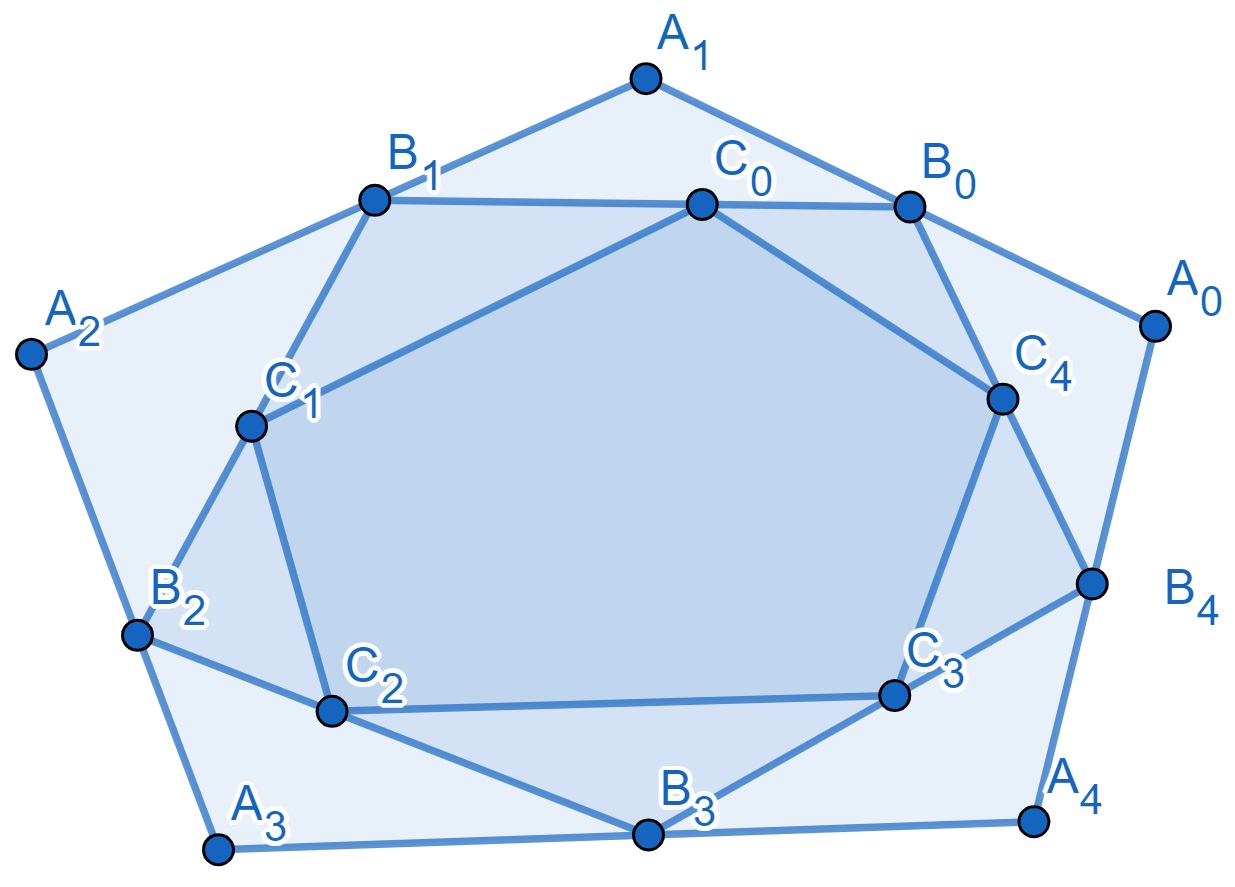}
  \captionof{figure}{Inscribed and circumscribed convex polygon.}
  \label{fig3}
\end{minipage}%
\begin{minipage}{.5\textwidth}
  \centering
  \includegraphics[width=1\linewidth]{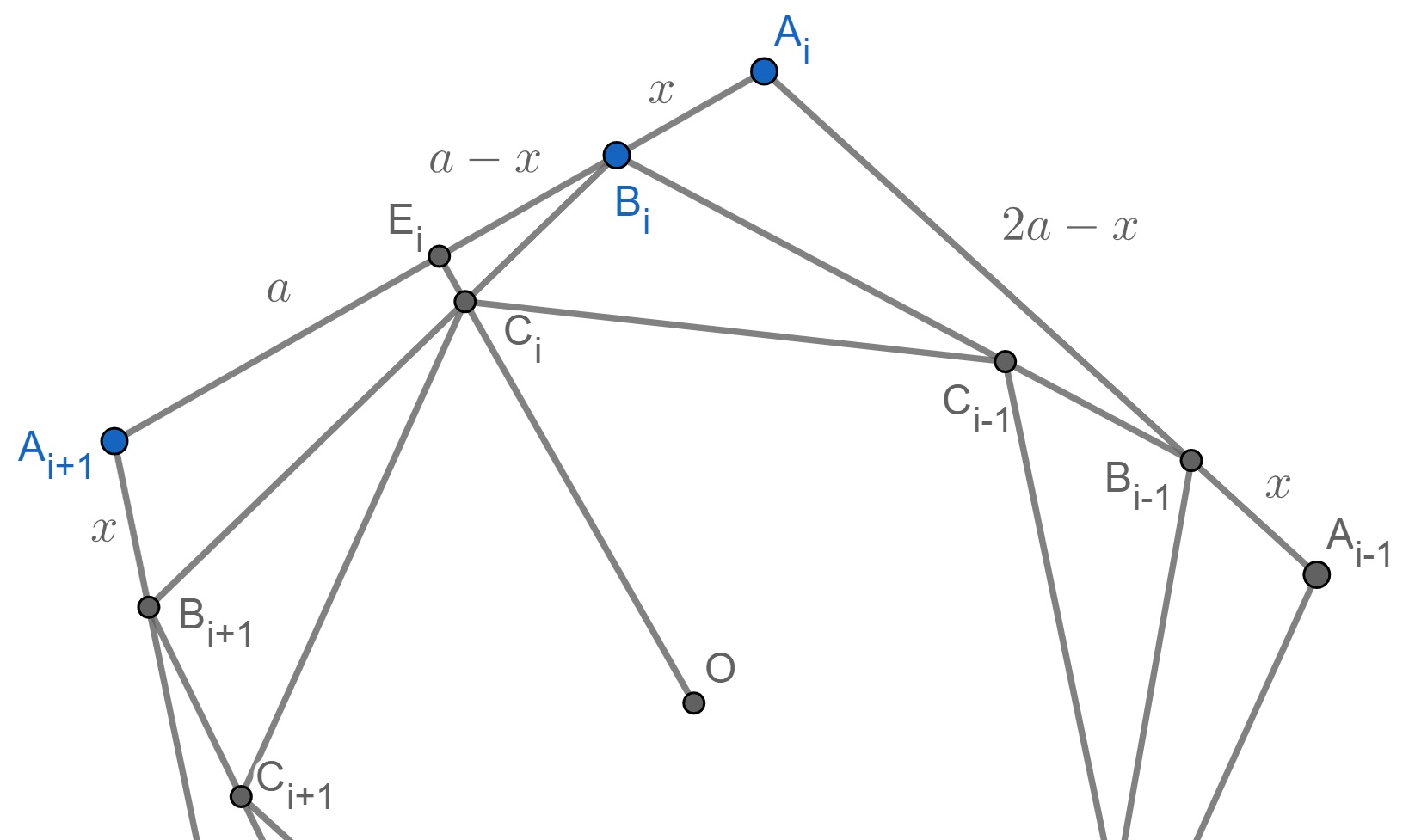}
  \captionof{figure}{Example (Regular polygon).}
  \label{fig4}
\end{minipage}
\end{figure}

The key ingredients of the proof will be the following examples and the lemma, which are interesting on their own.

\begin{example}
Let $A_0 A_1 A_2\ldots A_{n-1}$ be a regular polygon with with side length equal to $2a$ and center $O$. Let $E_i$ be the midpoints of $A_i A_{i+1}$ $(i=0,1,\ldots,n-1)$. Let $B_i\in A_i E_{i}$ and denote $|A_iB_i|=x$ $(i=0,1,\ldots,n-1)$. Then $|E_iB_i|=a-x$ (see Fig. 4). Denote intersection point of line segments $OE_i$ and $B_iB_{i+1}$ by $C_i$. Then
$$
|C_iE_i|=\frac{x(a-x)\sin{\frac{2\pi}{n}}}{2a-2x\sin^2{\frac{\pi}{n}}}.
$$
Denote $
f(x)=\frac{x(a-x)\sin{\frac{2\pi}{n}}}{2a-2x\sin^2{\frac{\pi}{n}}}.
$
One can check that $$f\left(\frac{a}{2}\right)=f\left(\frac{a}{1+\cos^2{\frac{\pi}{n}}}\right)=\frac{a\sin{\frac{2\pi}{n}}}{4\left(1+\cos^2{\frac{\pi}{n}}\right)}.$$ 
Therefore the two regular polygons $B_0 B_1 B_2\ldots B_{n-1}$ corresponding to $x=\frac{a}{2}$, $x=\frac{a}{1+\cos^2{\frac{\pi}{n}}}$ and their reflections with respect to line $OE_i$ form 4 polygons, which are inscribed into $A_0 A_1 A_2\ldots A_{n-1}$ and circumscribed around $C_0 C_1 C_2\dots C_{n-1}$. Note that points $B_i$ corresponding to $x=\frac{a}{2}$ and $x=\frac{a}{1+\cos^2{\frac{\pi}{n}}}$ are constructible with compass and straightedge, provided that regular polygon $A_0 A_1 A_2\ldots A_{n-1}$ is given (see Appendix A).

\begin{figure}
\centering
\begin{minipage}{.5\textwidth}
  \centering
  \includegraphics[width=1\linewidth]{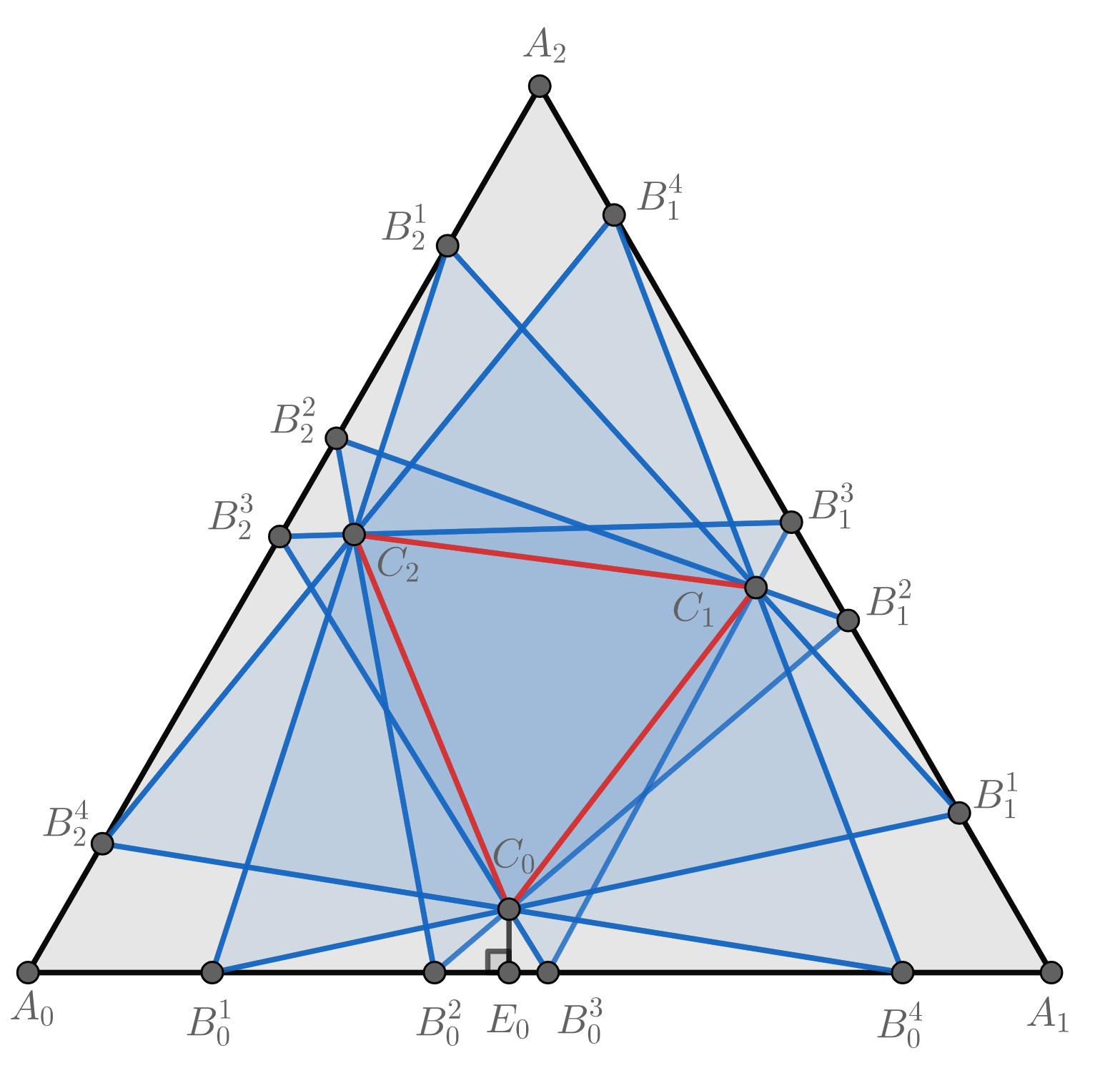}
  \captionof{figure}{$n=3$.}
  \label{fig5}
\end{minipage}%
\begin{minipage}{.5\textwidth}
  \centering
  \includegraphics[width=1\linewidth]{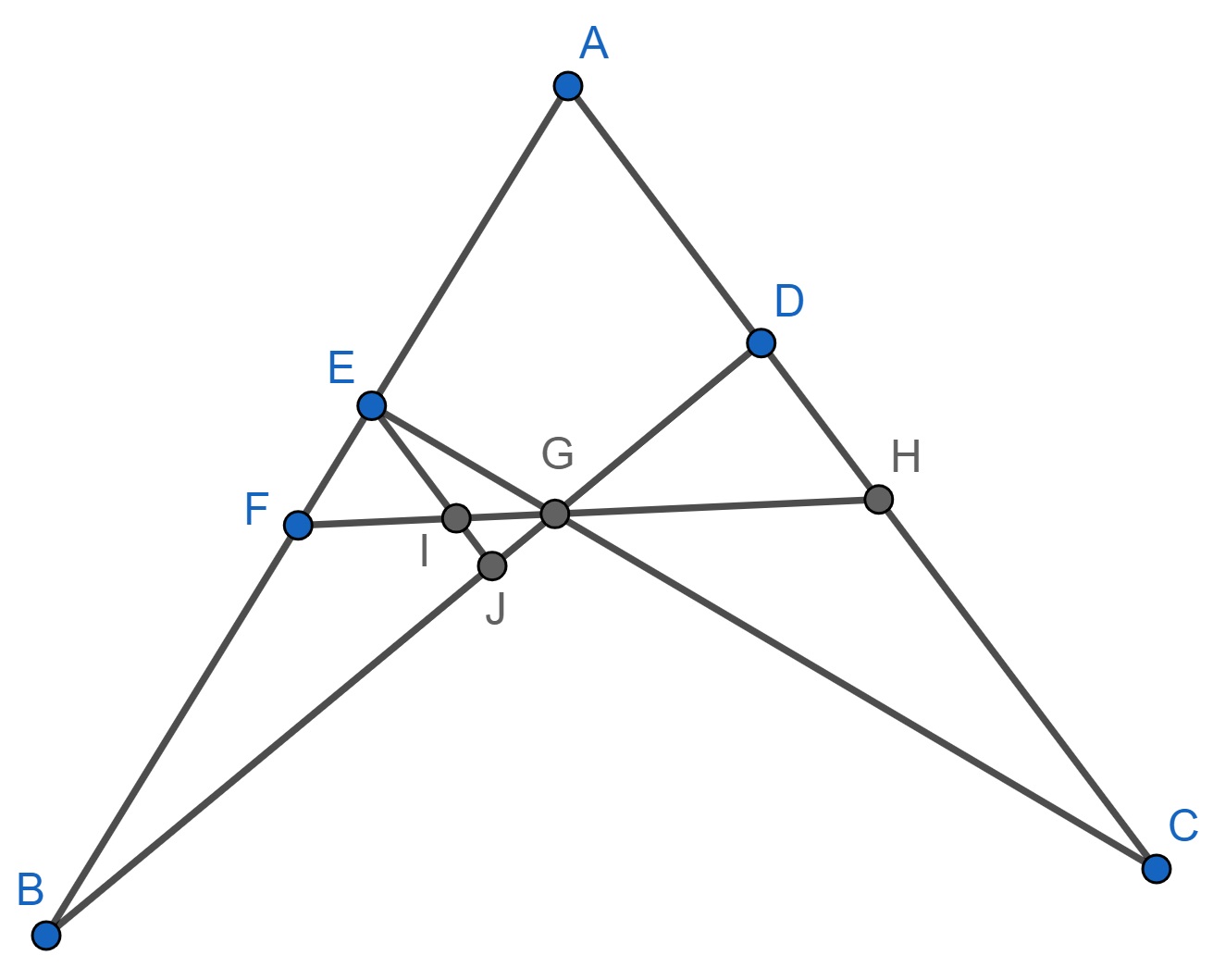}
  \captionof{figure}{Inequality $\frac{|BF|}{|FE|}>\frac{|DH|}{|HC|}$.}
  \label{fig6}
\end{minipage}
\end{figure}

\end{example}

\begin{example}
We will show how to find other exampes of 4 polygons inscribed into one and circumscribed around another convex polygon. Without loss of generality we can assume that $|A_iA_{i+1}|=1$ ($i=0,1,\ldots,n-1$). We use notations similar to those from Example 1 (see Figure 5 for case $n=3$). Denote $|A_iB_i^1|=x$, $|A_iB_i^2|=y$, $|A_{i+1}B_i^3|=z$, $|A_{i+1}B_i^4|=t$, $|A_iE_i|=a$, where $E_i$ is the base of perpendicular from point $C_i$ to line $A_iA_{i+1}$ for $i=0,1,\ldots,n-1$. We find that
$$
|C_iE_i|=f\left(a,x\right):= \left(a-x\right)\cdot \left(\frac{\sin\left(\frac{2\cdot \pi}{n}\right)}{\frac{1}{x}-2\cdot \left(\sin\left(\frac{\pi}{n}\right)\right)^{2}}\right).
$$
By solving equation $f(a,y)=f(a,x)$ for $y$ we obtain
$$
y=\frac{a-x}{1-2x\sin^2{\frac{\pi}{n}}}.
$$
So, $z,t$ are the solutions of equation $f(1-a,u)=f(a,x)$ for $u$. By changing $x$ and $a$, we can get infinitely many examples for each $n\ge3$ (see Figure 5 and \url{https://www.geogebra.org/calculator/exxbufdu}). It would be useful to find $x,y,z,t$ that can work both for regular $n$-gon and regular $m$-gon, when $m\ne n$. But we can show that it is impossible. Indeed, otherwise we would obtain
$$
a=x+y-2xy\sin^2{\frac{\pi}{n}},\ 1-a=z+t-2zt\sin^2{\frac{\pi}{n}},
$$
$$
a'=x+y-2xy\sin^2{\frac{\pi}{m}},\ 1-a'=z+t-2zt\sin^2{\frac{\pi}{m}},
$$
where $a'=|A_iE_i|$ is analogous distance for the $m$-gon. By adding the corresponding equalities we would obtain
$$
x+y+z+t-2(xy+zt)\sin^2{\frac{\pi}{n}}=1,\ x+y+z+t-2(xy+zt)\sin^2{\frac{\pi}{n}}=1,
$$
which is impossible because $\sin^2{\frac{\pi}{n}}\ne\sin^2{\frac{\pi}{m}}$ whenever $n\ne m$.
\end{example}

\begin{lemma}
Let cevians $BD$ and $CE$ of $\triangle ABC$ intersect at point $G$. A line through point $G$ intersects line segments $BE$ and $CD$ at points $F$ and $H$. Then $\frac{|BF|}{|FE|}>\frac{|DH|}{|HC|}$.
\end{lemma}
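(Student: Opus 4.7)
The plan is to place the triangle in affine coordinates, apply Menelaus in a carefully chosen sub-triangle to express the ratio $DH/HC$ as a scalar multiple of $BF/FE$, and then observe that the scalar is strictly less than $1$. Since ratios of collinear segments are affine invariants, I would send $A,B,C$ to $(0,0)$, $(1,0)$, $(0,1)$, so that $E=(e,0)$, $D=(0,d)$ with $e,d\in(0,1)$, and the line through $G$ meets $BE$ at $F=(f,0)$ and $CD$ at $H=(0,h)$, with $e<f<1$ and $d<h<1$. The quantities to compare become $BF/FE=(1-f)/(f-e)$ and $DH/HC=(h-d)/(1-h)$.

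First I would locate $G=BD\cap CE$ by solving the $2\times 2$ linear system of the two cevian lines; this also yields $CG/GE=(1-d)/[d(1-e)]$. Second, I would apply Menelaus's theorem to $\triangle ACE$ with transversal $F$--$G$--$H$; this triangle is the natural choice because the transversal meets each of its three sides (or their extensions) at precisely the points of interest. The resulting identity expresses $h/(1-h)$ in terms of $f,d,e$, and after a short algebraic simplification based on $(h-d)/(1-h)=(1-d)h/(1-h)-d$, it collapses to the clean relation
\[
\frac{DH}{HC}=\frac{de(1-f)}{f-e}=de\cdot\frac{BF}{FE}.
\]
The desired inequality then follows at once, since $d,e\in(0,1)$ force $de<1$; in fact this gives the sharper identity $BF/FE=(AB\cdot AC)/(AE\cdot AD)\cdot DH/HC$.

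I do not foresee a serious obstacle: once the right triangle for Menelaus is chosen (namely $\triangle ACE$, which symmetrizes the roles of $d$ and $e$), the computation is essentially forced. A purely synthetic variant avoids coordinates altogether by invoking cross-ratio invariance under projection from $G$: the pencil through $G$ cuts line $AB$ in the quadruple $(B,E;A,F)$ and line $AC$ in $(D,C;A,H)$, and equating these cross-ratios yields the same factor $de=(AE/AB)(AD/AC)$ in a single line.
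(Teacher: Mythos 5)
Your proposal is correct, and it reaches the conclusion by a genuinely different (and in one respect sharper) route than the paper. The paper argues synthetically: it draws the line through $E$ parallel to $AC$, meeting $FG$ at $I$ and $BG$ at $J$, applies Menelaus to the triangle $BEJ$ cut off by that parallel, converts $\frac{|EI|}{|IJ|}$ into $\frac{|CH|}{|HD|}$ by similar triangles centered at $G$, and gets strictness from $|JG|<|GB|$. You instead normalize by an affine map (or, equivalently, apply Menelaus directly in $\triangle ACE$ to the transversal $F$--$G$--$H$, using $\frac{EG}{GC}$ from the cevian intersection) and obtain the exact identity
\begin{equation*}
\frac{|DH|}{|HC|}=\frac{|AE|\cdot|AD|}{|AB|\cdot|AC|}\cdot\frac{|BF|}{|FE|},
\end{equation*}
which I checked and which immediately gives the strict inequality since $|AE|<|AB|$ and $|AD|<|AC|$; your cross-ratio variant (projection from $G$ sends $B,E,A,F$ on $AB$ to $D,C,A,H$ on $AC$) is also valid. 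What your approach buys is an explicit constant with no auxiliary construction — in fact it identifies the paper's unevaluated factor $\frac{|GB|}{|JG|}$ as exactly $\frac{|AB|\cdot|AC|}{|AE|\cdot|AD|}$ — while the paper's version stays purely synthetic, using only Menelaus, similarity, and a one-line length comparison.
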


\begin{proof}
Let the line through point $E$ and parallel to line $AC$ intersect line segments $FG$ and $BG$ at points $I$ and $J$ (see Figure 6). By Menelaus' theorem $$\frac{|BF|}{|FE|}\cdot \frac{|EI|}{|IJ|}\cdot \frac{|JG|}{|GB|}=1.$$
By similarity of $\triangle EIG$ and $\triangle CHG$, $\triangle JIG$ and $\triangle DHG$, $\triangle EGJ$ and $\triangle CGD$, we obtain $\frac{|EI|}{|IJ|}=\frac{|CH|}{|HD|}$. Finally, noting $|JG|<|GB|$, we obtain $$\frac{|BF|}{|FE|}=\frac{|IJ|}{|EI|}\cdot \frac{|GB|}{|JG|}=\frac{|DH|}{|HC|}\cdot \frac{|GB|}{|JG|}>\frac{|DH|}{|HC|}.$$
\end{proof}

\begin{theorem}
The number of convex polygons $B_0 B_1 B_2\ldots B_{n-1}$, that can be inscribed into a given convex polygon $A_0 A_1 A_2\ldots A_{n-1}$ and circumscribed around around another convex polygon $C_0 C_1 C_2\dots C_{n-1}$ is at most 4.
\end{theorem}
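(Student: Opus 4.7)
The plan is to parametrize each candidate polygon $B$ by the affine coordinate $t = |A_0B_0|/|A_0A_1| \in (0,1)$ together with a discrete cyclic shift $k \in \{0,1,\dots,n-1\}$ specifying which vertex $C_{k+i}$ lies on edge $B_iB_{i+1}$. For a fixed $k$, the successive vertices are forced by the rule $B_{i+1}=(B_iC_{k+i})\cap A_{i+1}A_{i+2}$, and the polygon closes if and only if the $n$-fold composition returns $t$ to itself. Since projection through a fixed point induces a projectivity between two lines (cross-ratio is preserved by the pencil through $C_{k+i}$), each elementary step, read in the affine coordinate, is a M\"obius transformation, and so is the closure map $T_k$.

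A non-identity M\"obius transformation on $\mathbb{RP}^1$ has at most two fixed points, giving at most two polygons per shift $k$. The degenerate identity case (a Poncelet-type porism, producing a continuum of solutions) would be ruled out by the Lemma: for any two hypothetical polygons of the same shift $k$, applying the Lemma at every vertex $A_{i+1}$ to the triangle cut off by the edges through the common point $C_{k+i}$ produces a strictly monotone relation between their corresponding parameters $t_i$ and $t'_i$, which cannot close after traversing all $n$ sides.

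The delicate step is bounding the number of admissible shifts by two, so that the total count becomes $2\cdot 2 = 4$ rather than $2n$. For this I would invoke the Lemma a second time, now comparing two candidate polygons $B,B'$ of distinct shifts $k\neq k'$: at each vertex $A_{i+1}$ their respective edges pass through different $C_j$'s, and the Lemma yields a strict inequality between the ratios $|A_iB_i|/|B_iA_{i+1}|$ and $|A_iB'_i|/|B'_iA_{i+1}|$. Chained around all $n$ vertices, the compounded strict inequalities force a globally consistent sign pattern in the parameter-differences $t_i-t'_i$ which, together with the convexity of $B$ and $C$, can accommodate at most the two shifts corresponding to the two possible orientations of $C$ inside $B$. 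This matches the structure of Example 1, where the four realized polygons split into two reflection-related pairs, and gives tightness.

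The main obstacle, in my view, is this shift-counting step: extracting a global combinatorial restriction on $k$ from the purely local strict inequality of the Lemma. I expect it to require a careful inductive chasing of inequalities around the polygon, together with an orientation analysis of the nested convex polygons $A,B,C$, so that the two surviving shifts are exactly the ones witnessed by the reflection pair in Example 1.
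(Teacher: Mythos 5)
Your overall architecture (at most $2$ closed polygons per admissible shift $k$, at most $2$ admissible shifts, hence $\le 4$) is exactly the skeleton of the paper's argument, and your first half is sound: the closure map for a fixed shift is a composition of central projections, hence a projectivity of a line, with at most two fixed points unless it is the identity; the identity (porism) alternative is indeed excluded by the Lemma, since three coexisting polygons with the same shift would give the strict chain $\frac{|B^1_iB^2_i|}{|B^2_iB^3_i|}>\frac{|B^1_{i+1}B^2_{i+1}|}{|B^2_{i+1}B^3_{i+1}|}$ around the cycle, which is self-contradictory. The paper itself notes this projective reformulation in its Section 4, while its official proof runs the Lemma-based contradiction directly on five hypothetical polygons.

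The genuine gap is the step you yourself flag: bounding the number of admissible shifts by two. Your plan to get this from a second application of the Lemma cannot work as stated, because the Lemma's hypothesis is that the two cevians and the transversal all pass through one common point $G$; when you compare two polygons of \emph{different} shifts, their edges near a vertex $A_{i+1}$ pass through \emph{different} vertices $C_j$, so there is no common point and the Lemma produces no inequality at all. The paper closes this step by a completely different, elementary containment argument, not by the Lemma: since $C_i\in B_iB_{i+1}$ with $B_i\in A_iA_{i+1}$ and $B_{i+1}\in A_{i+1}A_{i+2}$, the point $C_i$ lies in $\triangle A_iA_{i+1}A_{i+2}$; if it also lay on an edge of a polygon with shift $k$, it would lie in $\triangle A_{k+i}A_{k+i+1}A_{k+i+2}$ as well. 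For $n>3$ the regions $\mathbb{M}_i=\triangle A_iA_{i+1}A_{i+2}\cap\triangle A_{i-1}A_iA_{i+1}$ are pairwise disjoint and contain the respective $C_i$, which forces $k\in\{0,-1\}$; the case $n=3$ needs a separate small argument (the putative third shift $k=1$ contradicts the chosen ordering of the points $B^j_0$ on $A_0A_1$). Without some argument of this containment/convexity type, your ``globally consistent sign pattern'' claim is a hope rather than a proof, so as written the proposal does not yet establish the bound $4$.
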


\begin{proof}
We saw in Example 1 (see also more general Example 2) that the case with 4 polygons is possible. Suppose on the contrary that there is a configuration with at least 5 polygons. Denote 5 of them by $B^j_0 B^j_1 B^j_2\ldots B^j_{n-1}$ $(j=1,2,\ldots,5)$ so that $|A_0B^j_0|<|A_0B^{j+1}_0|$, for $j=1,2,3,4$ (see Figure 7). Immediately, $|A_iB^j_i|<|A_iB^{j+1}_i|$, for all $i=0,1,\ldots,n-1$ and $j=1,2,3,4$. Consider the first two of the polygons: $B^1_0 B^1_1 B^1_2\ldots B^1_{n-1}$ and $B^{2}_0 B^{2}_1 B^{3}_2\ldots B^{3}_{n-1}$.
Then the vertices of $C_0 C_1 C_2\dots C_{n-1}$ are $C_i=B^1_iB^1_{i+1}\cap B^{2}_iB^{2}_{i+1}$ $(i=0,1,\ldots,n-1)$, where all the indices are taken modulo $n$. Note that $B^{3}_0 B^{3}_1$ can not pass through $C_0$. Indeed, otherwise $B^{3}_i B^{3}_{i+1}$ pass through $C_i$ for all $i=0,1,\ldots,n-1$, and by Lemma 2.1,
$$
\frac{|B^1_iB^2_i|}{|B^2_iB^3_i|}>\frac{|B^1_{i+1}B^2_{i+1}|}{|B^2_{i+1}B^3_{i+1}|},
$$
for all $i=0,1,\ldots,n-1$, which is impossible, because it implies $
\frac{|B^1_0B^2_0|}{|B^2_0B^3_0|}>\frac{|B^1_0B^2_0|}{|B^2_0B^3_0|}.
$
So, by convexity of $A_0 A_1 A_2\ldots A_{n-1}$ and $C_0 C_1 C_2\dots C_{n-1}$, $B^{3}_0 B^{3}_1$ passes through $C_1$. This means that $C_i\in B^{3}_{k+i} B^{3}_{k+i+1}$ $(i=0,1,\ldots,n-1)$ for $k=-1$. We can show that $k=0$ and $k=-1$ are the only possible choices of $k$ for which $C_i\in B^{j}_{k+i} B^{j}_{k+i+1}$ ($i=0,1,\ldots,n-1$, and $j=1,2,3,4$).
Indeed, if $n>3$, then all sets $$\mathbb{M}_i=\triangle A_iA_{i+1}A_{i+2}\cap \triangle A_{i-1}A_iA_{i+1}\ (i=0,1,\ldots,n-1)$$ are disjoint (for $\mathbb{M}_i$ only the interior regions of the triangles are considered) and $C_i\in \mathbb{M}_i$. If $n=3$, then the previous argument fails because $\triangle A_iA_{i+1}A_{i+2}$ $(i=0,1,2)$ coincides with $\triangle A_0A_{1}A_{2}$ and therefore $\mathbb{M}_i=\triangle A_0A_{1}A_{2}$. 
But if $n=3$, then the only remaining option for $k$ is $k=1$ modulo 3, and this case is impossible because in this case there is $j\ne 1,2$ such that $C_0\in B^{j}_{1} B^{j}_{2}$, and therefore $B^{j}_{1}$ is closer to $A_1$ than $B^{1}_{1}$, which is against our initial assumptions.

Consequently, $B^{4}_{0} B^{4}_1$, $B^{5}_{0} B^{5}_1$ also pass through $C_{1}$, and therefore $B^{3}_i B^{3}_{i+1}$, $B^{4}_i B^{4}_{i+1}$, and $B^{5}_i B^{5}_{i+1}$ pass through $C_{i+1}$ for all $i=0,1,\ldots,n-1$.
By Lemma 2.2,
$$
\frac{|B^3_{i}B^4_{i}|}{|B^4_{i}B^5_{i}|}>\frac{|B^3_{i+1}B^4_{i+1}|}{|B^4_{i+1}B^5_{i+1}|},
$$
for all $i=0,1,\ldots,n-1$, which is again impossible.
This shows that our assumption about the existence of 5 polygons was wrong and therefore the maximal number of polygons is 4.
\end{proof}

\begin{rem}
This result about convex planar polygons can be easily extended to convex polygons in spherical and hyperbolic geometry. Indeed, any convex spherical polygon is contained in a semisphere (see \cite{fejes}, Sect. 1.6) which can be projected centrally (also known as a gnomonic projection or rectilinear projection) to a plane resulting with convex planar polygons. Similarly, for hyperbolic geometry we can use Klein-Beltrami Model (see \cite{klein}, Sect. 9.6, Fig. 226, 227). 
\end{rem}

\begin{figure}
\centering
\begin{minipage}{.5\textwidth}
  \centering
  \includegraphics[width=1\linewidth]{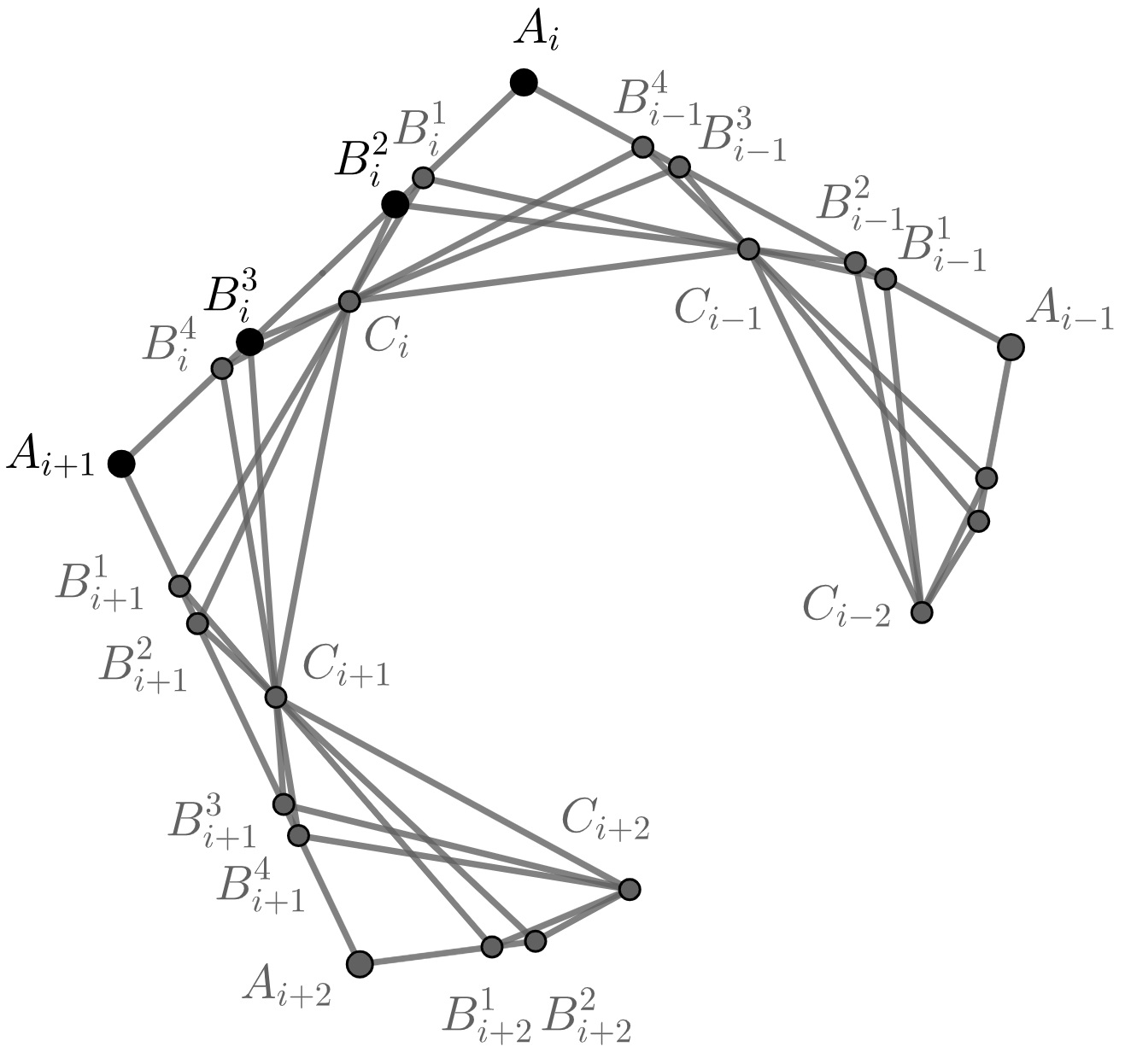}
  \captionof{figure}{Proof of Thm 2.3.}
  \label{fig7}
\end{minipage}%
\begin{minipage}{.5\textwidth}
  \centering
  \includegraphics[width=0.8\linewidth]{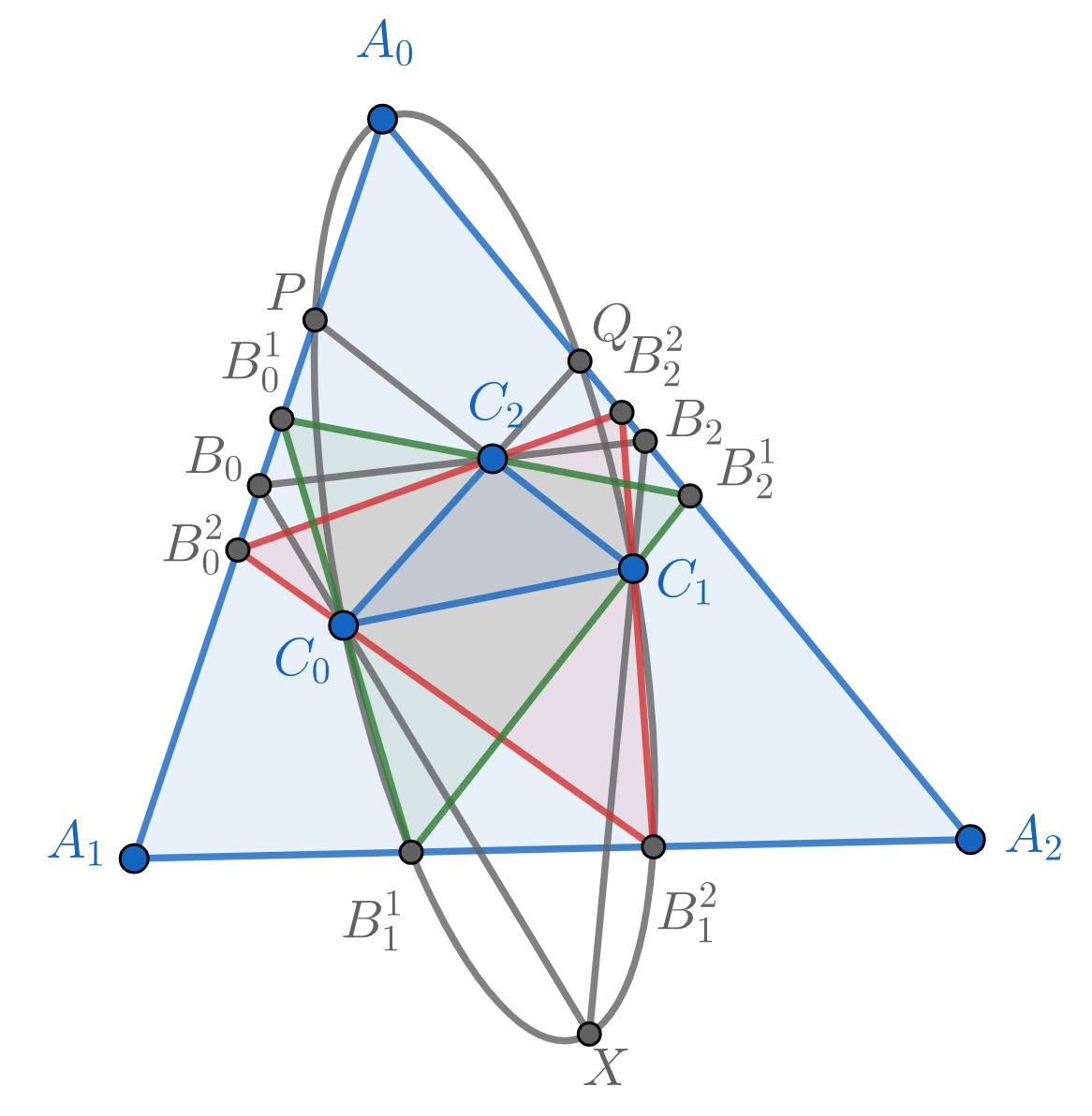}
  \captionof{figure}{\newline Maclaurin-Braikenridge's method.}
  \label{fig8}
\end{minipage}
\end{figure}

\section{Main results} We are now ready to answer the question about the maximum of the number of graphs inscribed into the graph of a convex polyhedron and circumscribed around another graph. As was mentioned before any such graph gives pattern of inscribed and circumscribed polygons on the faces of $\alpha$. By Theorem 2.2, their number can not exceed 4, which is the maximum of the number of convex polygons inscribed into one and circumscribed around another convex polygon. On the other hand we can give an example of a convex polyhedron $\alpha$ with 4 inscribed graphs $\beta$ all of which are circumscribed around the same graph $\gamma$. As $\alpha$ we take the graph of a regular octahedron all of the faces of which are equilateral triangles (see Firure 2). On each triangular face of the regular octahedron we construct 4 triangles as in Example 1 for $n=3$. The vertices and the sides of these $32$ triangles on $8$ faces of octahedron give graph $\beta$. The vertices of 8 inner triangles give the vertices of $\gamma$. Thus we proved the following result.

\begin{theorem}
The number of graphs $\beta$, that can be inscribed into graph $\alpha$ of a given convex polyhedron and circumscribed around around another graph $\gamma$ is at most 4.
\end{theorem}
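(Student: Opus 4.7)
The plan is to reduce the polyhedral problem to the planar Theorem 2.2. Any admissible $\beta$ restricts, on each face $F$ of $\alpha$, to a convex polygon $\beta_F$ inscribed in $F$ and circumscribed around the polygon $\gamma_F$ cut out on $F$ by $\gamma$; conversely, the whole graph $\beta$ is recovered from its face restrictions, so counting admissible $\beta$'s reduces to counting globally compatible systems of face polygons.

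First I would fix a reference face $F_0$ of $\alpha$ and apply Theorem 2.2 to conclude that there are at most four candidates for $\beta_{F_0}$. The main step is then to show that $\beta_{F_0}$ determines $\beta$ on the whole polyhedron. For a face $F'$ sharing an edge $e$ with $F_0$, the vertex of $\beta$ on $e$ is already fixed by $\beta_{F_0}$, so in $F'$ we know one vertex of $\beta_{F'}$ together with the prescribed circumscribed polygon $\gamma_{F'}$. Such a polygon is determined by a single vertex via the iterative construction: draw the line through the current vertex and the next vertex of $\gamma_{F'}$, intersect with the next side of $F'$, and repeat around $F'$. Hence $\beta_{F'}$ is unique. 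Since the face-adjacency graph of a convex polyhedron is connected, induction on face distance from $F_0$ shows that $\beta$ is globally determined by $\beta_{F_0}$. The map $\beta \mapsto \beta_{F_0}$ is therefore an injection into a set of size at most four, which establishes the bound.

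For tightness I would use the regular octahedron. On each equilateral face apply Example 1 with $n=3$, which places the vertices of the four inscribed triangles at four positions on each side that form a set symmetric under the midpoint reflection of the side. Because this set of four positions is the same whichever of the two faces sharing an edge is used to define it, the local choice on each face propagates consistently across edges, and the bipartite structure of the octahedron's face-adjacency graph yields four distinct globally consistent configurations, realizing the bound.

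The main obstacle I anticipate is the closure condition when propagating the iterative construction around a vertex of $\alpha$. A priori, this could fail and prevent some of the four planar choices on $F_0$ from extending globally. For the upper-bound direction this is harmless, because an obstruction only decreases the count, so the planar bound of 4 transfers directly. Where the closure condition matters is in the extremal example, and there the symmetry of the regular octahedron together with the midpoint-symmetry of Example 1's four solutions resolves it combinatorially.
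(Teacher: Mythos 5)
Your proposal takes the paper's route: restrict $\beta$ to the faces of $\alpha$, apply the planar theorem face by face, and realize the bound with Example 1 for $n=3$ on the regular octahedron (where an even number of faces meets at each vertex); in fact you are more explicit than the paper, which handles the reduction in one sentence and never spells out why a global graph is determined by its restriction to a single face. The one step you should justify more carefully is the claim that $\beta_{F'}$ is ``determined by a single vertex via the iterative construction, hence unique.'' That construction is deterministic only after one decides which vertex of $\gamma_{F'}$ the side of $\beta_{F'}$ issuing from the known vertex must contain, and there are two admissible matchings --- exactly the index shifts $k=0$ and $k=-1$ that the paper's planar proof must handle. A priori two distinct face polygons, one for each shift, could share the vertex on the common edge $e$, which would break the injectivity of $\beta\mapsto\beta_{F_0}$ on which your count rests.

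The gap is repairable by a short argument. Suppose two distinct admissible polygons of the face $F'$ share the vertex $P$ on $e$. Since with a fixed matching the polygon is determined by $P$, their matchings differ, and then one and the same vertex $C$ of $\gamma_{F'}$ must lie on two chords of $F'$ issuing from $P$: the side of one polygon leaving $P$ and the side of the other polygon arriving at $P$, these chords ending in the interiors of two different sides of $F'$. If the two chords span distinct lines, their only common point is $P$, forcing $C=P$, contradicting $C_i\neq B_i,B_{i+1}$; if they lie on one line, that line would meet the boundary of the convex face in three points ($P$ and the two distinct endpoints), which is impossible. Hence a face polygon is indeed determined by a single vertex, your propagation and injectivity go through, and the rest of your argument, including the octahedral example (which is the paper's example), matches the paper.
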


Note that we can not apply the same method to tetrahedron and icosahedron to create another example similar to Figure 2, because at each vertex of the tetrahedron and icosahedron odd number of edges and faces meet (see Appendix C). Because of the odd number, in the resulting picture the vertices of $\beta$ will not alternate between closer and farther points on the edges of $\alpha$. Example 1 provides 4 polygons for any $n$ and therefore we can construct 4 squares or 4 pentagons as in the case of triangles. But for the same reason we can not use them on a cube and dodecahedron to create an example because again the number of edges meeting at each vertex is odd. There are of course semiregular polyhedra formed from 2 or more different regular polygons with equal sides. But in Example 2 we have shown that in a certain sense such an example with different polygons is impossible. Nevertheless this does not exclude the possibility of more general examples.

By combining several octahedra together on their triangular faces, so that any vertex belongs to only 1 or 2 octahedra, one can construct arbitrarily large polyhedra which can serve as non-convex examples for Theorem 3.1 (see Appendix C). It would be interesting to generalize these results and their proofs for higher dimensions. Beside the question about higher dimensions several other questions can be asked for further exloration.
\begin{problem}
Is it always possible to inscribe 4 convex polygons into any given convex polygon so that all 4 are circumscribed around another convex polygon?
\end{problem}

\begin{problem}
Is it always possible to inscribe 4 graphs into graph of any given convex polyhedron so that all 4 are circumscribed about another graph?
\end{problem}

\begin{problem}
Is the octahedron and its inscribed graphs in Figure 2 the only example of this kind for Platonic regular and Archimedes' semi-regular polyhedra?
\end{problem}

\begin{problem}
What can be said about the number of inscribed/circumscribed convex polyhedra obtained by the process of truncation of their vertices?
\end{problem}

\begin{problem}
What can be said about the number of face-inscribed/circumscribed convex polyhedra?
\end{problem}

\section{Maclaurin-Braikenridge's method of conic generation} 
It is well known that if triangles $A_0 A_1 A_2$ and $C_0 C_1 C_2$ are given, moving line $B_0B_2$ passes through $C_2$, where $B_0$ and $B_2$ are on lines $A_0A_1$ and $A_0A_2$, respectively, then the locus of intersection point $X$ of $B_0C_0$ and $B_2C_1$ is in general a conic (see Figure 8). See e.g. \cite{salmon}, p. 230, \cite{pamfil}, p. 22, \cite{pamfil2}, p. 97, \cite{wylie}, \cite{pedoe}, p. 328, where this result, known as Maclaurin-Braikenridge's method of conic generation, is generalized for polygons $A_0 A_1 A_2\ldots A_{n-1}$ and $C_0 C_1 C_2\dots C_{n-1}$. If $n=3$, then this conic passes through $A_0,\ C_0,\ C_1, P,$ and $Q$, where $P=A_0A_1\cap C_1C_2$ and $Q=A_0A_2\cap C_0C_2$. This conic can intersect $A_1A_2$ at maximum 2 points shown in Figure 8 as $B^1_1$ and $B^2_1$, which can be starting point for the construction of 2 triangles $B^1_0B^1_1B^1_3$ and  $B^2_0B^2_1B^2_3$ inscribed into $\triangle A_0 A_1 A_2$ and circumscribed around $\triangle C_0 C_1 C_2$. Therefore the key argument of using Lemma 2.1 in the proof of Theorem 2.3, can also be replaced by the fact that a general conic and a line can intersect at maximum 2 points.

Also, an important and natural connection to projective geometry can be mentioned. The problem can be reformulated as finding fixed points of a projective  transformation of a line. Indeed, let $f_i$ be the central projection with center $C_i$ from the line $A_iA_{i+1}$ to the line $A_{i+1}A_{i+2}$. The polygon $B_0\ldots B_{n-1}$ is a solution to the problem if and  only if $f_{n-1}\circ\ldots\circ f_0(B_0) = B_0$. A projective transformation either has at  most two points or is the identity (Von Staudt's lemma, see e.g. \cite{gur}, p. 79), therefore the problem can have 0, 1, 2 or  $\infty$ many solutions when the shift $k$ is fixed. Thus the proof of Theorem 2.2  actually shows that this projective transformation is not the identity, under certain assumptions on the position of lines and projection centers. In any case, it is not difficult to see that the projective transformation of $A_0A_1$ obtained by taking these central projections cannot be an identity because $A_0$ is not a fixed point. If it were, then $C_n$ would have to be on the line $A_0A_n$ since $B_n$ is on that line and $B_nC_n$ must go through $A_0$. Note the fact that 2 of the 4 polygons correspond to one projective transformation and the other 2 correspond to another one. There is a shift in the index which was mentioned in Section 2.

The generalization of Maclaurin-Braikenridge's method was studied through analytic tools in \cite{poncelet}, Tome I, p. 308-314 (see \cite{brian} for geometric consideration). The proof of case $n=3$ is the converse of Pascal's theorem \cite{coxeter}, which states that hexagon $A_0PC_1XC_0Q$ is inscribed into a conic (\cite{turnbull}, p.321, \cite{tweddle}, p. 377). The general case can be proved by induction on $n$ (see Section 5). Using this general result it is possible to show that if the condition of convexity is lifted and the polygons are extended to include also the lines containing the sides of the polygons, then in the general case, the maximal number of polygons inscribed into one polygon and circumscribed around another one, can be as large as $n!(n-1)!$ (excuding degenerate cases when the conics can coincide with the lines giving infinitely many such polygons \cite{poncelet}, Tome 2, p. 10, \cite{pamfil2}, p. 97). Indeed, there are $(n-1)!$ cyclic permutations of the sides of polygon $A_0 A_1 A_2\ldots A_{n-1}$. Between these sides the vertices of polygon $C_0 C_1 C_2\dots C_{n-1}$ can be put in $n!$ ways. Because of independence, we multiply them to obtain $n!(n-1)!$, then divide by 2 (orientation is not important) and finally multiply by 2 (each conic gives at most two polygons) to obtain again $n!(n-1)!$. For example, if $n=3$, then there are at most 12 triangles, all of which are shown in Fig. 7.

\begin{figure}
\centering
\begin{minipage}{.5\textwidth}
  \centering
  \includegraphics[width=1\linewidth]{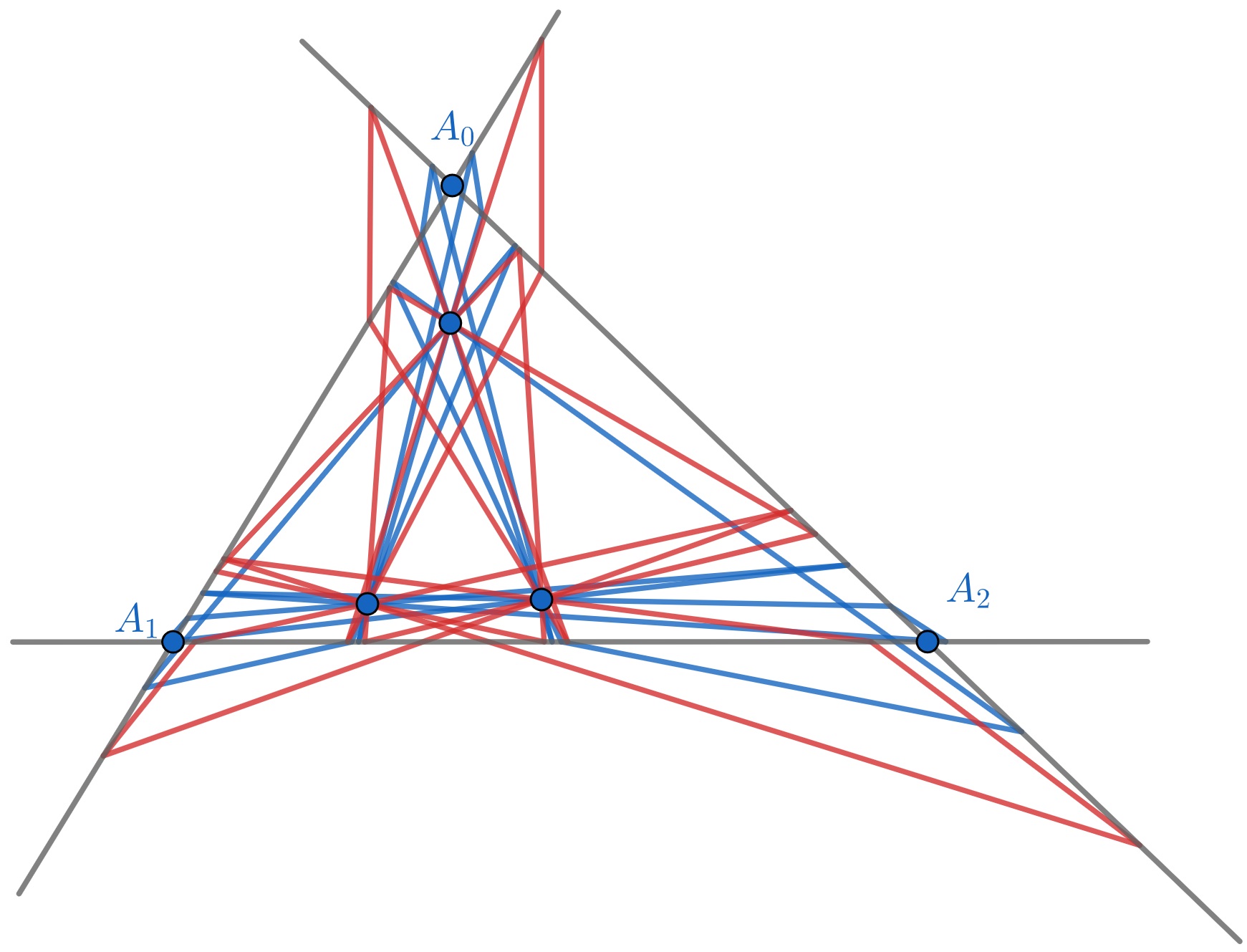}
  \captionof{figure}{There are $3!2!=12$ triangles (\url{https://www.geogebra.org/geometry/zhjwyxud}).}
  \label{fig9}
\end{minipage}%
\begin{minipage}{.5\textwidth}
  \centering
  \includegraphics[width=0.8\linewidth]{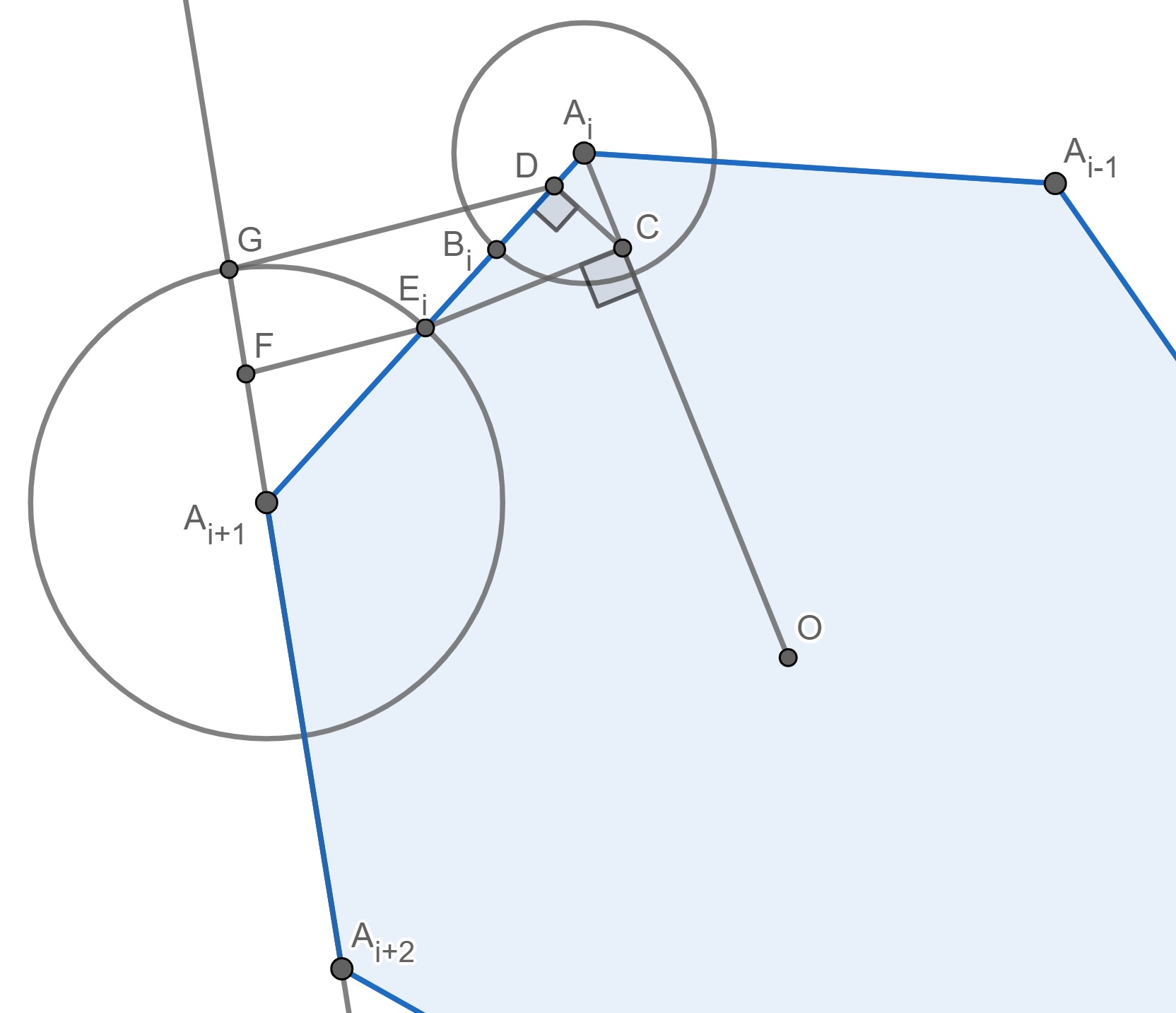}
  \captionof{figure}{\newline Construction of $B_i$ (\url{https://www.geogebra.org/geometry/jjxzgu7a}).}
  \label{fig10}
\end{minipage}
\end{figure}

\section*{Appendix A: Construction with straightedge and compass} Suppose that regular polygon $A_0 A_1 A_2\ldots A_{n-1}$ is given. Note that point $B_i$ corresponding to $x=\frac{a}{2}$ is the midpoint of $E_i A_{i}$, which is easy to construct using unmarked ruler and compass.
Below we will show how to construct using the same instruments point $B_i$ corresponding to $x=\frac{a}{1+\cos^2{\frac{\pi}{n}}}$. If $n=4$, then $x=\frac{2}{3}a$, and its construction is straightforward. So, we assume that $n\ne 4$.
Let $E_i$ be, as before, the midpoint of side $A_i A_{i+1}$ of regular polygon $A_0 A_1 A_2\ldots A_{n-1}$.
\begin{enumerate}
\item Drop perpendicular $E_iC$ to line $OA_i$.
\item Drop perpendicular $CD$ to line $A_iA_{i+1}$.
\item Draw circle with center $A_{i+1}$ through $E_i$. Let this circle intersect line $A_{i+2}A_{i+1}$ outside of line segment $A_{i+2}A_{i+1}$ at point $G$.
\item Draw line through $E_i$ parallel to $DG$. Let this line intersect line $A_{i+2}A_{i+1}$ at point $F$.
\item Draw circle with center $A_{i}$ and radius equal to through $A_{i+1}F$. This circle intersect line segment $A_{i}A_{i+1}$ at point $B_i$ such that $A_iB_i=\frac{A_iA_{i+1}}{2\left(1+\cos^2{\frac{\pi}{n}}\right)}$.
\end{enumerate}
The proof follows from the following arguments. Note first that $|A_iC|=a\cos{\frac{\pi}{n}}$, where as before $a=|E_iA_{i}|$. Then $|E_iD|=a\cos^2{\frac{\pi}{n}}$ and therefore $|A_{i+1}D|=a\left(1+\cos^2{\frac{\pi}{n}}\right)$. Since $|A_{i+1}E_i|=a$,$|A_{i+1}G|=a$.  Also from similarity of $\triangle A_{i+1}E_iF$ and $\triangle A_{i+1}DG$ we obtain that $\frac{|A_{i+1}E_i|}{|A_{i+1}D|}=\frac{|A_{i+1}F|}{|A_{i+1}G|}$, which simplifies to $|A_{i+1}F|=\frac{a}{1+\cos^2{\frac{\pi}{n}}}$. If $n=3$, then there is a shorter construction based on the fact that the corresponding sides of triangles $B_i^1B_{i+1}^1B_{i+2}^1$ and $B_{i+1}^3B_{i+2}^3B_{i+3}^3$ are parallel (see Figure 2). Similarly, if $n=6$, then one can use the fact that $|A_iD|=\frac{1}{8}\cdot |A_iA_{i+1}|$.

Note also that if regular polygon $A_0 A_1 A_2\ldots A_{n-1}$ and point $B_i$ with specific $x=x_0$ $(0<x_0<a,\ x_0\ne \frac{a}{1+\cos{\frac{\pi}{n}}})$ is given, then one can construct using the same instruments another point $B^{\prime}_i$ with $x=x_1=|A_iB^{\prime}_i|$, such that $
f(x_0)=f(x_1).
$ Indeed, $x_1$ is the second solution of quadratic equation $
\frac{x(a-x)\sin{\frac{2\pi}{n}}}{2a-2x\sin^2{\frac{\pi}{n}}}=f(x_0)
$ (the first solution is $x=x_0$), all of the coefficients of which are constructible. For $x_0= \frac{a}{1+\cos{\frac{\pi}{n}}}$, one can check that $x_1=x_0$.

\section*{Appendix B: Generalized Maclaurin-Braikenridge's conic generation}

In this part of the paper an independent proof of generalized Maclaurin-Braikenridge's theorem for $n$-gons $(n\ge 3)$ will be given. It is based on the method of mathematical induction for $n$. First, we need to prove the following lemma.

\begin{lemma}
Let fixed points $B$ and $F$, and moving point $E$ be on a given conic $\omega$. Let $P$ and $l$ be a given point and a given line on the plane of $\omega$. Let $FE$ intersect $l$ at point $K$, and $PK$ intersect $BE$ at $T$. Then the locus of point $T$ is a conic passing through points $P$ and $B$ (see Fig. 9).
\end{lemma}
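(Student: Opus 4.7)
The plan is to apply Steiner's theorem on projective pencils, which states that if two pencils of lines with distinct centers are related by a projectivity (and are not in perspective), then the locus of intersection points of corresponding lines is a conic passing through the two centers; in the perspective case it degenerates to a line, which may still be regarded as a degenerate conic.

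First I would isolate the two pencils whose intersection traces the locus, namely the pencil through $B$ (with general element $BE$) and the pencil through $P$ (with general element $PK$). Then I would build an explicit projectivity between them, parametrized by $E$, by composing a chain of elementary projective maps:
\begin{enumerate}
\item By Steiner's theorem on a conic, the pencil of lines through $B$ is in projective correspondence with the points of $\omega$ via $BE \leftrightarrow E$.
\item Again by Steiner on $\omega$, the pencil through $F$ is in projective correspondence with points of $\omega$ via $FE \leftrightarrow E$. Composing with the previous step, the pencils through $B$ and $F$ are projectively related by $BE \leftrightarrow FE$.
\item The perspectivity from the pencil through $F$ onto the line $l$ (namely $FE \mapsto K = FE\cap l$) is projective.
\item The perspectivity from $l$ onto the pencil through $P$ (namely $K \mapsto PK$) is projective.
\end{enumerate}
Chaining these four steps gives a projectivity $BE \mapsto PK$ between the pencils centered at $B$ and $P$. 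By Steiner's converse (locus theorem), $T = BE \cap PK$ sweeps out a conic.

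To identify the two special points on the locus, I would choose two distinguished positions of $E$. Taking $E$ to be the second intersection with $\omega$ of the line $F(PB\cap l)$, we get $K = PB\cap l$, hence $PK = PB$, and since $T$ also lies on $BE$, it must coincide with $B$. Taking $E$ to be the second intersection of $BP$ with $\omega$, the line $BE$ becomes $BP$ itself, and any line through $P$ meets $BP$ at $P$, so $T = P$. Thus the conic passes through both $B$ and $P$.

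The one subtlety is the possibility that the projectivity constructed above is a perspectivity, in which case the locus degenerates to a line; this still passes through $B$ and $P$ (which are then the only common points of the two pencils on the common axis), so the statement holds with a degenerate conic. A second, minor obstacle is the careful treatment of the limit positions $E\to B$ and $E\to F$, where some of the defining lines become tangents to $\omega$; these boundary cases are standard in projective arguments and do not affect the conclusion once Steiner's conic theorem is invoked in its full projective form.
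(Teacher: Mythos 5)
Your proof is correct, but it follows a different route from the paper's. You build the projectivity $BE \mapsto FE \mapsto K \mapsto PK$ as a chain of Steiner correspondences and perspectivities and then invoke the converse of Steiner's conic generation for the two pencils at $B$ and $P$; this is exactly the ``two-lines but non-elementary'' argument the paper itself alludes to after Theorem 5.2 (citing Steiner's generation in \cite{gur}), so it is a legitimate shortcut — but it leans on the Chasles--Steiner machinery (both the direct theorem on a conic and its converse for pencils), which Appendix B deliberately avoids, since its stated aim is an independent, more elementary proof. The paper instead splits into two cases: when $l$ misses $\omega$, it sends $l$ to the line at infinity and $\omega$ to a circle, so that $T$ is caught by the constant inscribed angle $\angle PTB=\angle FEB$; when $l$ meets $\omega$ at $A$ and $D$, it combines Pascal on the hexagon $ACFDBE$, Desargues for the triangles $AET$ and $DFP$ perspective from $K$, and the converse of Pascal on $ACPDBT$ to exhibit the conic through the five fixed points $A,C,P,D,B$. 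Your approach buys brevity and uniformity (no case split, and the passage through $B$ and $P$ comes for free from Steiner, so your two special-position checks are actually redundant); the paper's buys elementarity and explicit control of the conic via five concrete points. One small slip on your side: in the degenerate (perspectivity) case the locus is the axis of perspectivity together with the self-corresponding line $BP$, and it is that line $BP$, not the axis, that carries $B$ and $P$ — your parenthetical about ``the only common points of the two pencils on the common axis'' misstates this, though the conclusion you need still holds.
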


\begin{figure}
\centering
\begin{minipage}{.5\textwidth}
  \centering
  \includegraphics[width=0.8\linewidth]{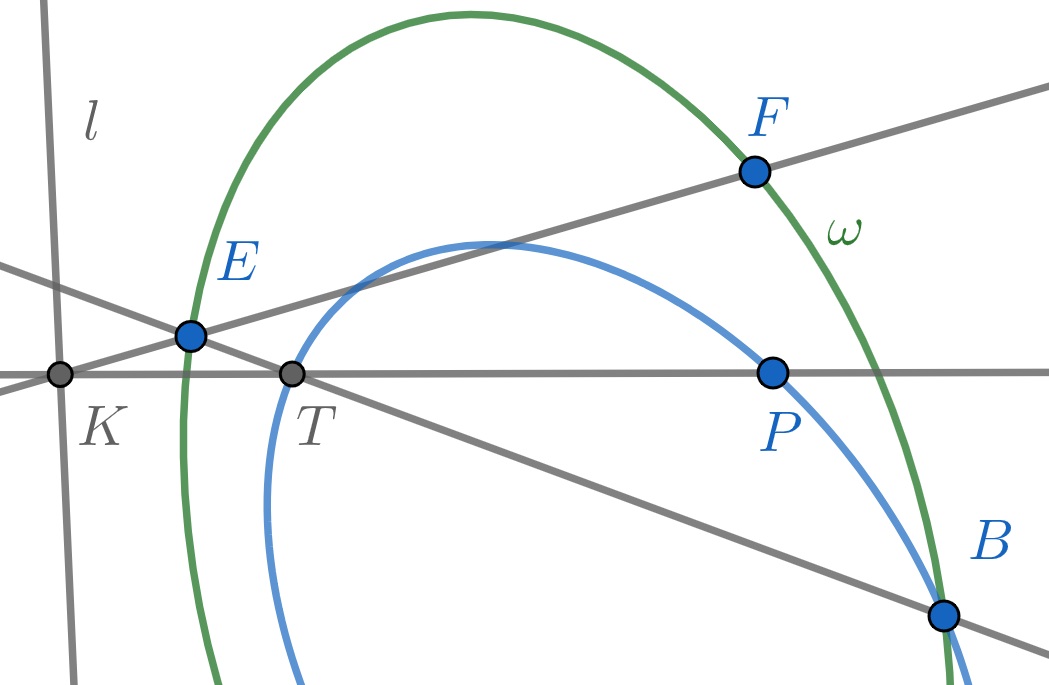}
  \captionof{figure}{Lem. 5.1.}
  \label{fig11}
\end{minipage}%
\begin{minipage}{.5\textwidth}
  \centering
  \includegraphics[width=0.8\linewidth]{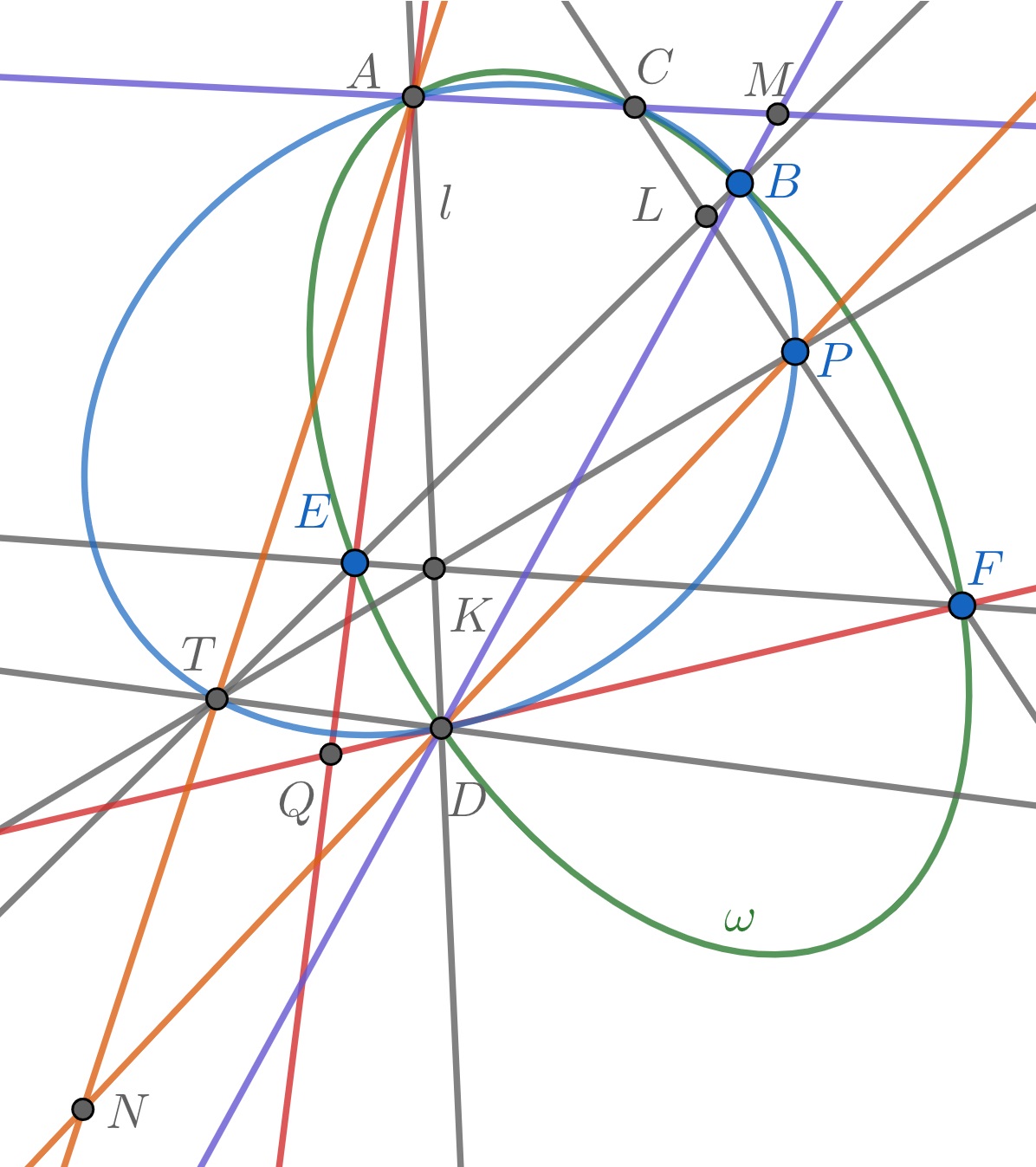}
  \captionof{figure}{Proof.}
  \label{fig12}
\end{minipage}
\end{figure}

\begin{proof}
Consider 2 cases.

Case 1. Suppose that line $l$ does not intersect conic $\omega$. Then we can apply a projective transformation mapping $l$ to infinite line and then an affine transformation mapping the image of conic $\omega $ to a circle. Let us use the same letters for the new objects. When $E$ changes on circle $\omega$, $\angle FEB$ does not change. Since $K$ is now an infinite point on infinite line $l$, $PK\parallel FK$, and therefore, $\angle PTB=\angle FEB=\textnormal{const}$. So, locus of $T$ is a circle. This means that inverse of affine and projective transformations of the locus of $T$ will be a conic.

Case 2. Suppose now that $l$ intersects $\omega$ at points $A$ and $D$ as in Fig. 10. Denote $C=PF\cap \omega$, 
$L=PF\cap BE$, $M=AC\cap BD$, $Q=AE\cap FD$, and $N=AT\cap PD$. By Pascal's theorem, since hexagon $ACFDBE$ is inscribed into conic $\omega$, points $M,L,Q$ are collinear. By Desarques' theorem, since $\triangle AET$ and $\triangle DFP$ are perspective with respect to point $K$ ($AD$, $EF$, and $TP$ are concurrent), these triangles are also perspective with respect to a line. Therefore points $M$, $L$, and $N$ are collinear. Then by Pascal's theorem, hexagon $ACPDBT$ is inscribed into a conic. Since points $A,\ C,\ P,\ D,\ B$ are fixed, the locus of point $T$ is a conic.
\end{proof}

We are ready to state and prove the following general result.
\begin{theorem}
Let $l_0, l_1,\ldots ,l_{n-1}$ be $n$ lines, and $C_0, C_1,\ldots ,C_{n-1}$ be $n$ points in general position $(n\ge 3)$. Let changing points $B_i\in l_i$ $(i=0,1,\ldots,n-1)$ and $B_n\in l_0$ be chosen so that $C_i\in B_iB_{i+1}$ $(i=0,1,\ldots,n-1)$. Then the locus of point $X_n=B_0 B_1\cap B_n B_{n-1}$ is a conic passing through $C_0$ and $C_{n-1}$.
\end{theorem}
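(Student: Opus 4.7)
The plan is induction on $n$, invoking Lemma~5.1 to drive the inductive step.

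\textbf{Base case} $n=3$. Parameterise the configuration by $B_0\in l_0$. The points $B_1,B_2,B_3$ are obtained from $B_0$ by successive central projections through $C_0,C_1,C_2$, so the map $B_0\mapsto B_3$ on $l_0$ is projective. Consequently $X_3=B_0B_1\cap B_3B_2$ is the intersection of corresponding lines in two projectively related pencils centred at $C_0$ and $C_2$. Steiner's theorem on projective generation of conics (equivalently, the converse of Pascal's theorem applied to the hexagon referenced in Section~4) then yields that the locus is a conic through $C_0$ and $C_2$.

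\textbf{Inductive step.} Assume the result for $n-1$ and apply it to the data $l_0,\ldots,l_{n-2}$ and $C_0,\ldots,C_{n-2}$: the locus of the auxiliary point $X_{n-1}=B_0B_1\cap \widetilde{B}_{n-1}B_{n-2}$, where $\widetilde{B}_{n-1}\in l_0$ satisfies $C_{n-2}\in B_{n-2}\widetilde{B}_{n-1}$, is a conic $\omega$ through $C_0$ and $C_{n-2}$. The decisive observation is that in the $n$-step configuration the point $B_{n-1}\in l_{n-1}$ lies on the same line $B_{n-2}C_{n-2}=C_{n-2}X_{n-1}$; replacing $\widetilde{B}_{n-1}$ by $B_{n-1}$ amounts only to selecting a different intersection of this line with a different fixed line. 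This places the setup in the exact form of Lemma~5.1. Apply Lemma~5.1 with the conic $\omega$, fixed points $B:=C_0$ and $F:=C_{n-2}$ on $\omega$, moving point $E:=X_{n-1}$, fixed line $l:=l_{n-1}$, and fixed point $P:=C_{n-1}$. Then $BE=C_0 X_{n-1}=B_0B_1$ and $FE=C_{n-2}X_{n-1}$, so $K=FE\cap l_{n-1}=B_{n-1}$; the line $PK=C_{n-1}B_{n-1}$ equals $B_nB_{n-1}$ because $B_n\in l_0$ is defined by $C_{n-1}\in B_{n-1}B_n$. Hence $T=PK\cap BE=X_n$, and Lemma~5.1 concludes that the locus of $X_n$ is a conic through $C_0$ and $C_{n-1}$, completing the induction.

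\textbf{Anticipated obstacle.} The main technical difficulty is degeneracy: the inductive conic $\omega$ could split into two lines (for example, when the $(n-1)$-step projective correspondence is a perspectivity), the line $l_{n-1}$ could be tangent to $\omega$ or disjoint from it, and the centre $C_{n-1}$ could happen to lie on $\omega$. The two cases already treated in the proof of Lemma~5.1 cover the secant/non-secant dichotomy for $l$; the remaining degeneracies should be absorbed into the ``general position'' hypothesis, with the general conclusion recovered afterwards by a continuity or Zariski-closure argument in the parameter $B_0$, since the defining equations of both $\omega$ and the locus of $X_n$ depend algebraically on the data.
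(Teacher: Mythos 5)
Your proof is correct and follows essentially the same route as the paper: induction on $n$, with the base case $n=3$ settled by the converse of Pascal's theorem (equivalently Steiner's projective generation), and the inductive step obtained by applying Lemma 5.1 with the moving point $E$ taken on the inductive conic, $K=B_{n-1}$ on $l=l_{n-1}$, $P=C_{n-1}$, and $T=X_n$. In fact your bookkeeping ($B=C_0$, $F=C_{n-2}$, so that $BE=B_0B_1$ and $FE\cap l_{n-1}=B_{n-1}$) is cleaner than the paper's own substitution, which contains an index slip (it writes $E=X_{k-1}$, $T=X_k$ and interchanges the roles of $F$ and $B$), and your closing remarks on possible degeneracies make explicit what the paper absorbs into the blanket ``general position'' hypothesis.
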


\begin{proof}
We saw in Section 4 that the case $n=3$ is the converse of Pascal's theorem and therefore it is easily proved \cite{coxeter2}, p. 85. Suppose that the claim is true for $n=k$ $(k\ge 3)$, which means that the locus of $X_k=B_0 B_1\cap B_{k} B_{k-1}$ is a conic $\omega_k$ passing through $C_0$ and $C_{k-1}$. If we take in Lemma 5.1 $E=X_{k-1}$, $F=C_0$, $B=C_{k-1}$, $K=B_k$, $\omega=\omega_k$, $P=C_k$, $T=X_k$, $l=l_k$, then we obtain that the locus of $X_k$ is a conic passing through $C_0$ and $C_{k}$.
\end{proof}

As in Section 4 above Projective geometry provides a two-lines but non-elementary proof of the generalized
Maclaurin-Braikenridge conic generation. The map sending the
line $B_0B_1$ to the line $B_{n-1}B_n$ in Theorem 5.2 is a projective map between
two pencils of lines, and by the Steiner’s conic generation (see e.g. \cite{gur}, p. 178) the intersection
point of such projectively related lines traces a conic. 

\section*{Appendix C: Other Platonic solids and non-convex polyhedra.} Figures 13-16 show 4 remaining Platonic solids which enjoy many symmetries but do not give an example for Theorem 3.1 because of odd number of faces meeting at each vertex. It would be interesting to determine if for 4 dimensions analogous regular polytopes give an example of 4 inscribed/circumscribed graphs or not.

\begin{figure}
\centering
\begin{minipage}{.5\textwidth}
  \centering
  \includegraphics[width=1\linewidth]{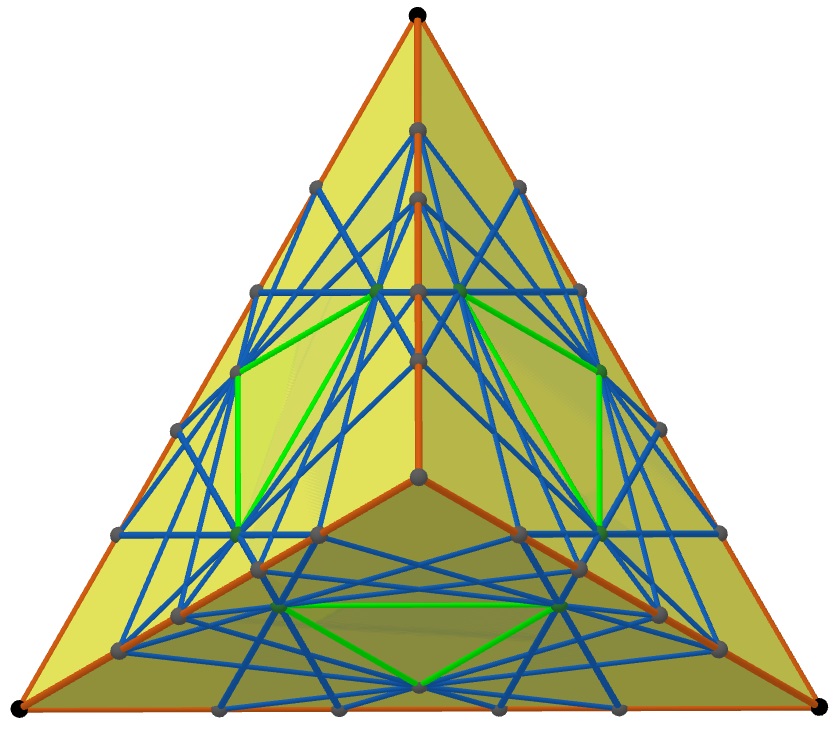}
  \captionof{figure}{Tetrahedron (\url{https://www.geogebra.org/3d/uptbuvkr}).}
  \label{fig13}
\end{minipage}%
\begin{minipage}{.5\textwidth}
  \centering
  \includegraphics[width=1\linewidth]{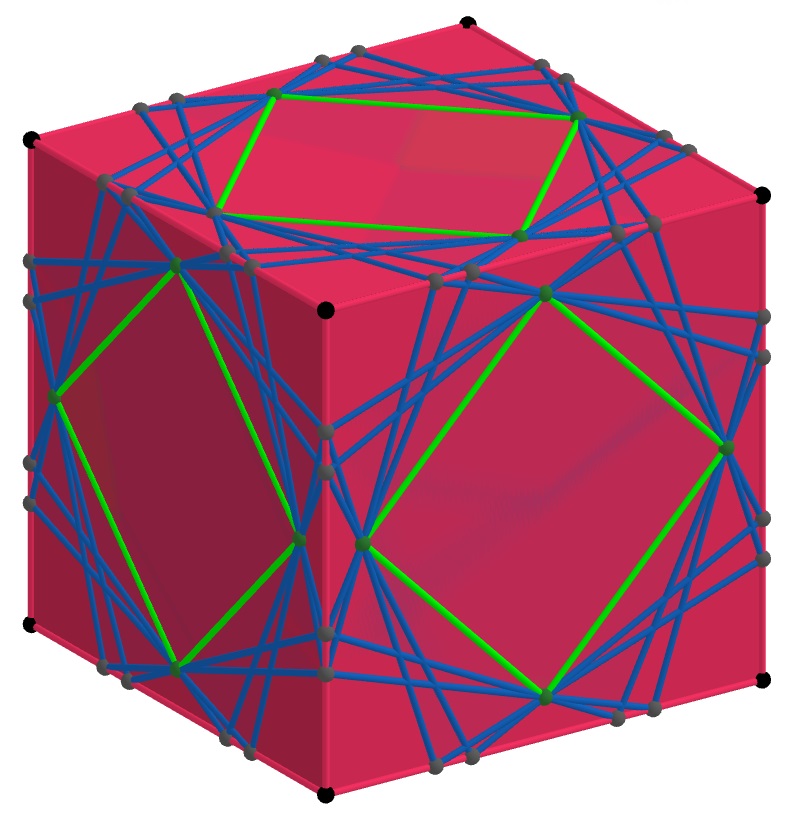}
  \captionof{figure}{Cube (\url{https://www.geogebra.org/3d/jszs6rbw}).}
  \label{fig14}
\end{minipage}
\end{figure} 

\begin{figure}
\centering
\begin{minipage}{.5\textwidth}
  \centering
  \includegraphics[width=1\linewidth]{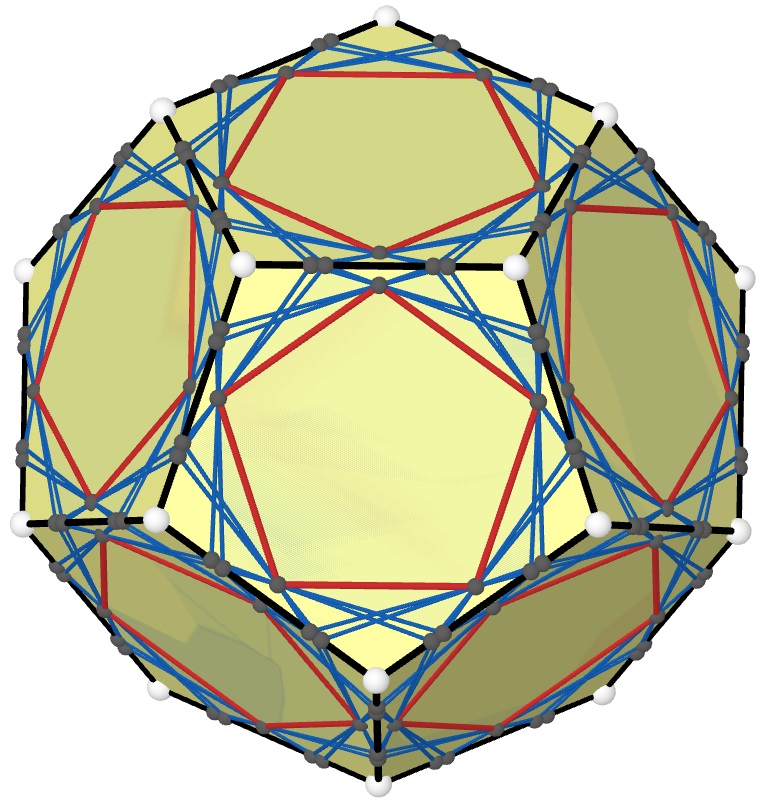}
  \captionof{figure}{Dodecahedron (\url{https://www.geogebra.org/3d/s8synt25}).}
  \label{fig15}
\end{minipage}%
\begin{minipage}{.5\textwidth}
  \centering
  \includegraphics[width=1\linewidth]{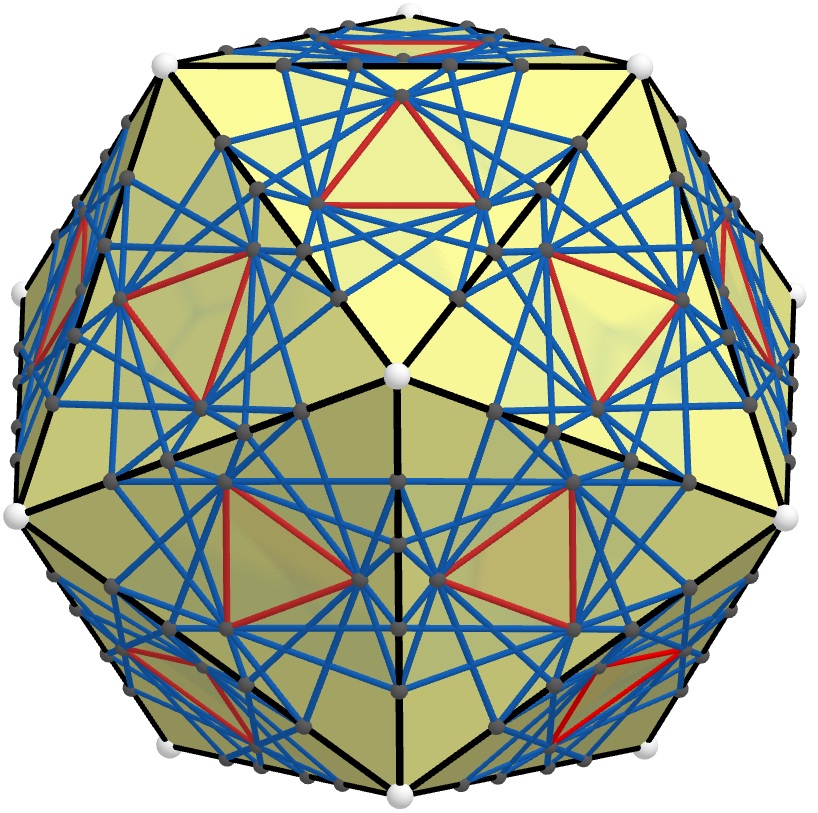}
  \captionof{figure}{Icosahedron (\url{https://www.geogebra.org/3d/vte3rcss}).}
  \label{fig16}
\end{minipage}
\end{figure} 

There are also non-convex polyhedra for which it is possible to construct an example similar to Figure 2. In Figure 17 several octahedra are joined along their triangular faces so that at each vertex only 4 or 6 faces meet. One can increase the number of octahedra to obtain arbitrarily large such non-convex examples. In Figure 18 and Figure 19, you can see models of non-convex polyhedra with 10 vertices, 24 edges, and 16 triangular faces, and 11 vertices, 27 edges, and 18 triangular faces, respectively, made of Geomag pieces. Note that at each vertex of these polyhedra again 4 or 6 faces meet, which make it possible to construct examples similar to the octahedron in Figure 2.

\begin{figure}
\centering

  \includegraphics[width=0.9\linewidth]{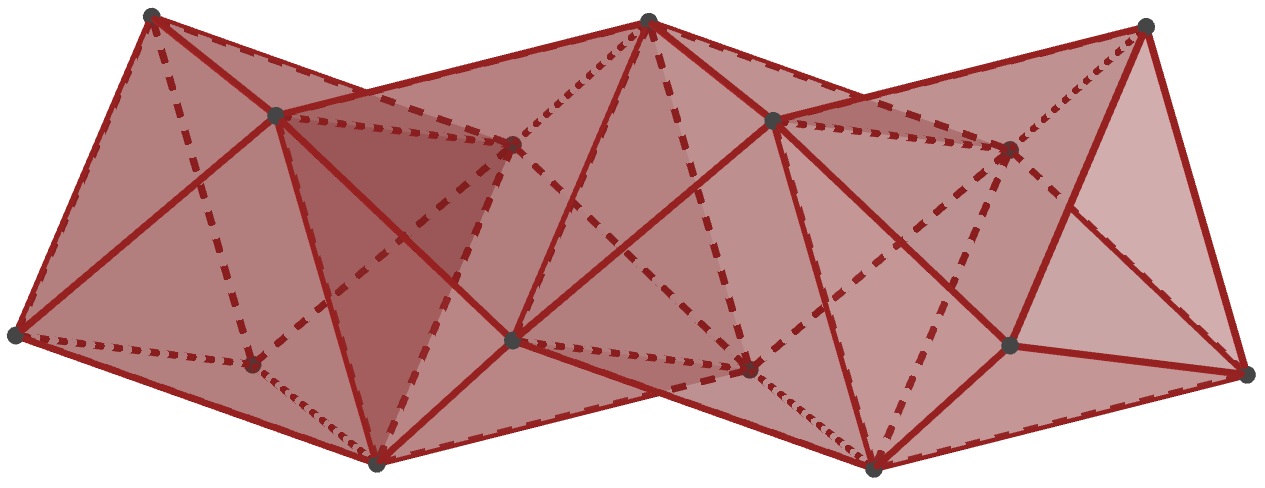}
  \captionof{figure}{Non-convex poyhedron obtained by gluing together several octahedra so that at any vertex at most 2 octahedra can meet \url{https://www.geogebra.org/3d/aru7qqux}.}
  \label{fig17}

\end{figure}
\begin{figure}
\centering
\begin{minipage}{.5\textwidth}
  \centering
  \includegraphics[width=1\linewidth]{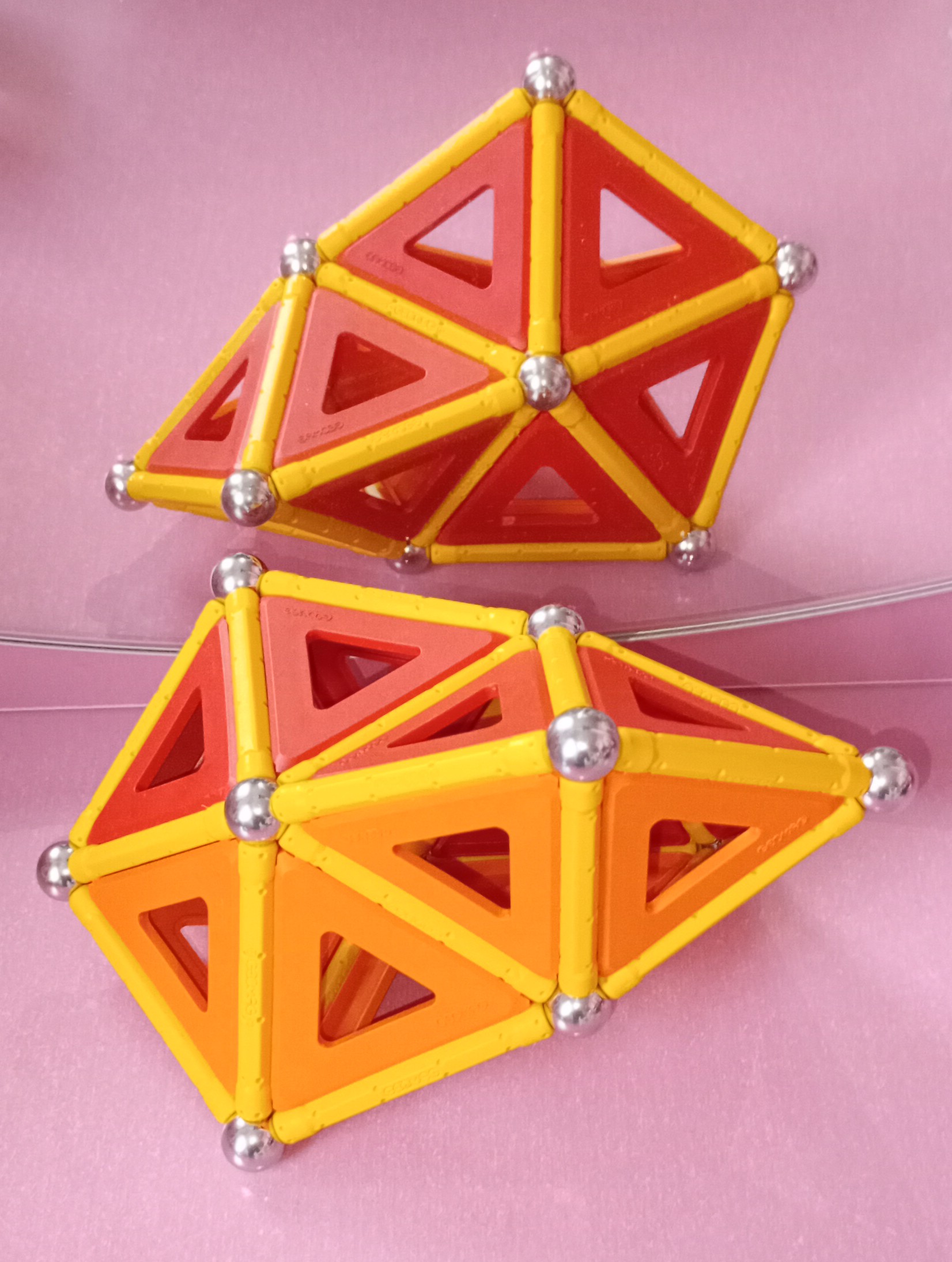}
  \captionof{figure}{Non-convex poyhedron with even number of identical faces meeting at each vertex.}
  \label{fig18}
\end{minipage}%
\begin{minipage}{.5\textwidth}
  \centering
  \includegraphics[width=0.77\linewidth]{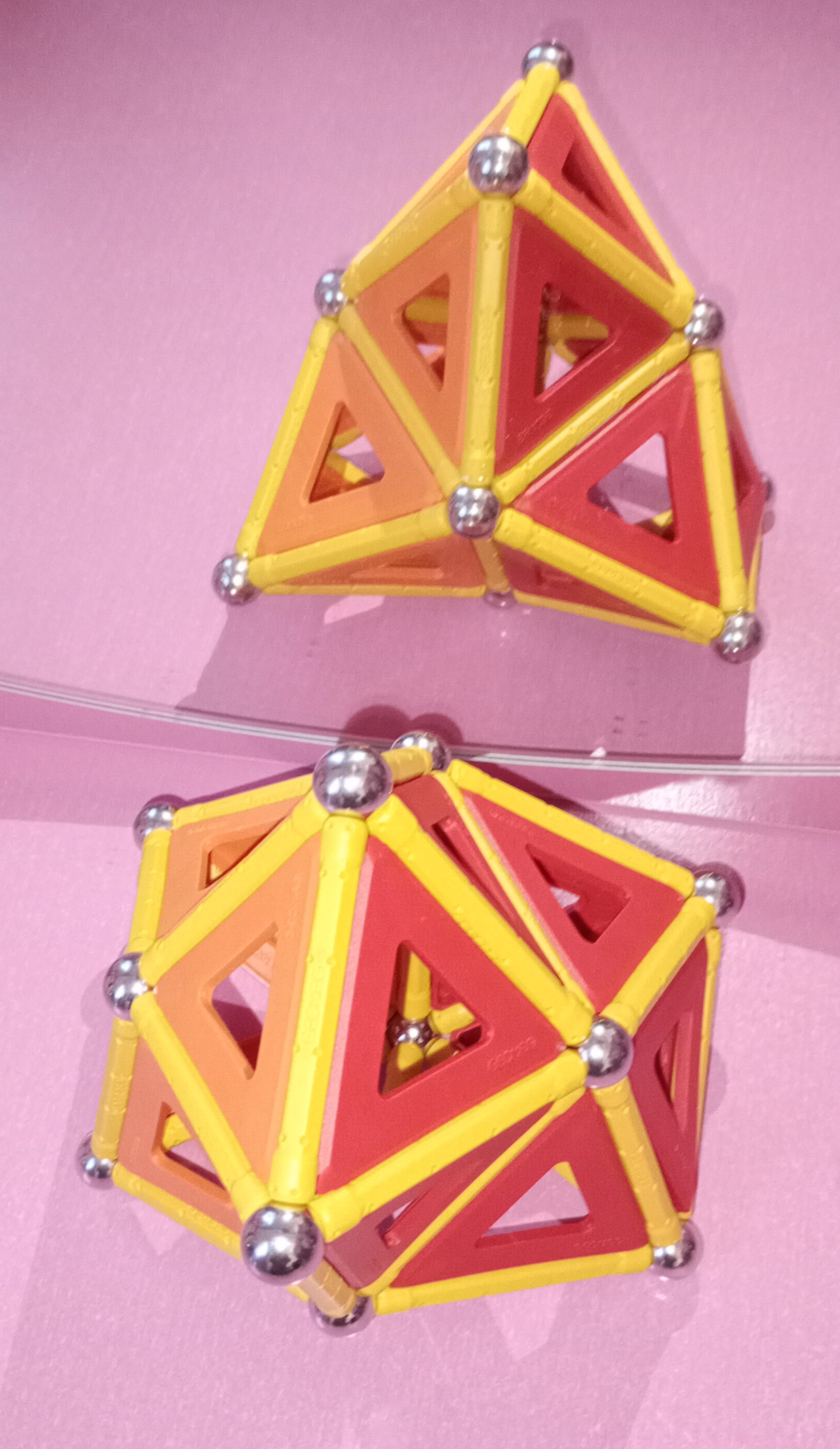}
  \captionof{figure}{As in the left example, the number of faces meeting at each vertex is either 4 or 6.}
  \label{fig19}
\end{minipage}
\end{figure}

Planar grids formed by regular hexagons, squares and triangles can be interpreted as infinitely large (degenerate) polyhedra (see \cite{fejes}, Sect. 1.7). In the case of hexagons (Figure 20) odd number of faces meet at each vertex, but for squares (Figure 21) and triangles this number is even. For this reason, hexagons do not give examples similar to Fig. 2, but squares and triangles do.

\begin{figure}
\centering\begin{minipage}{.5\textwidth}
  \centering
  \includegraphics[width=1\linewidth]{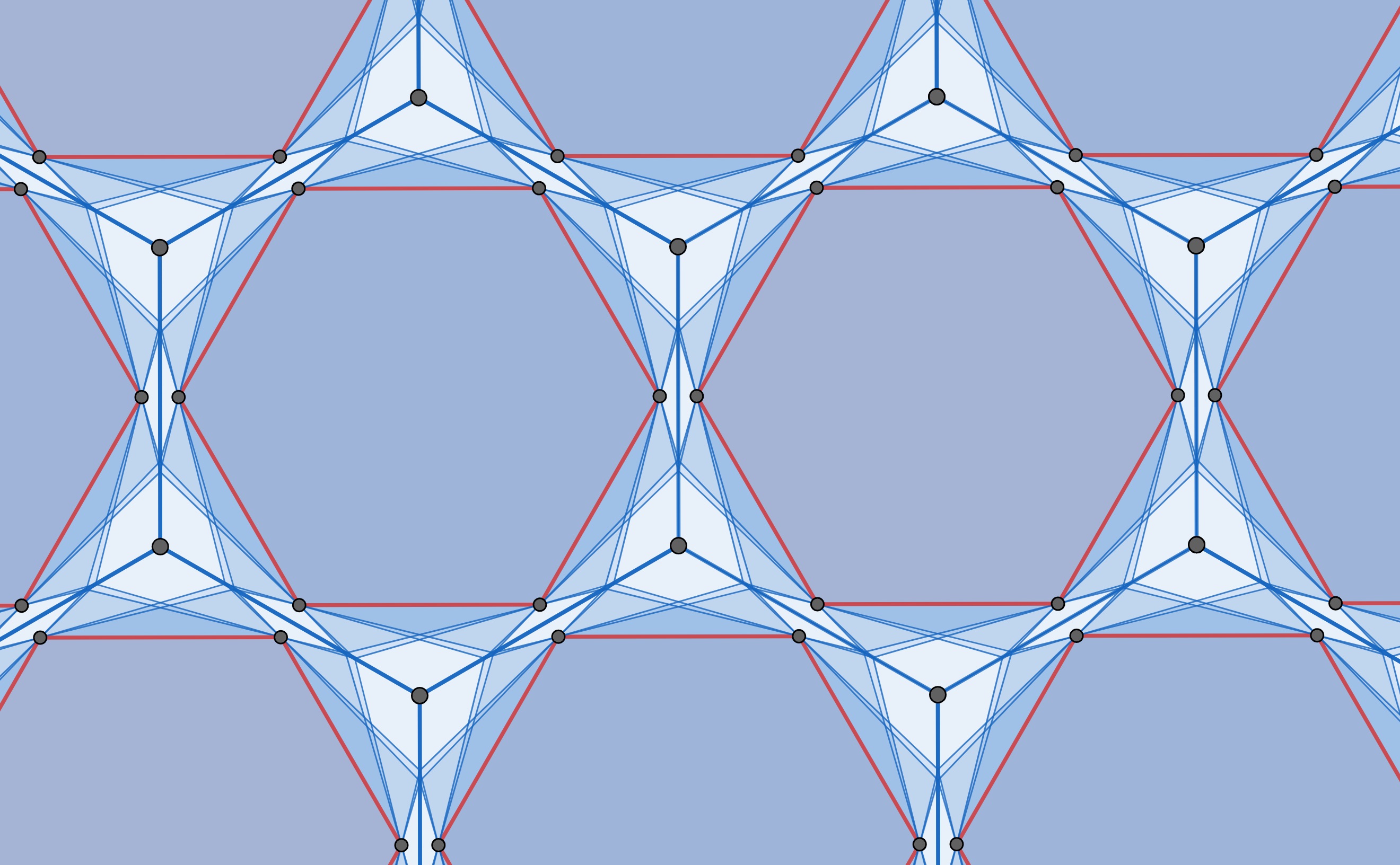}
  \captionof{figure}{Odd number of hexagonal faces meeting at each vertex do not give an example.}
  \label{fig20}
\end{minipage}%
\begin{minipage}{.5\textwidth}
  \centering
  \includegraphics[width=0.9\linewidth]{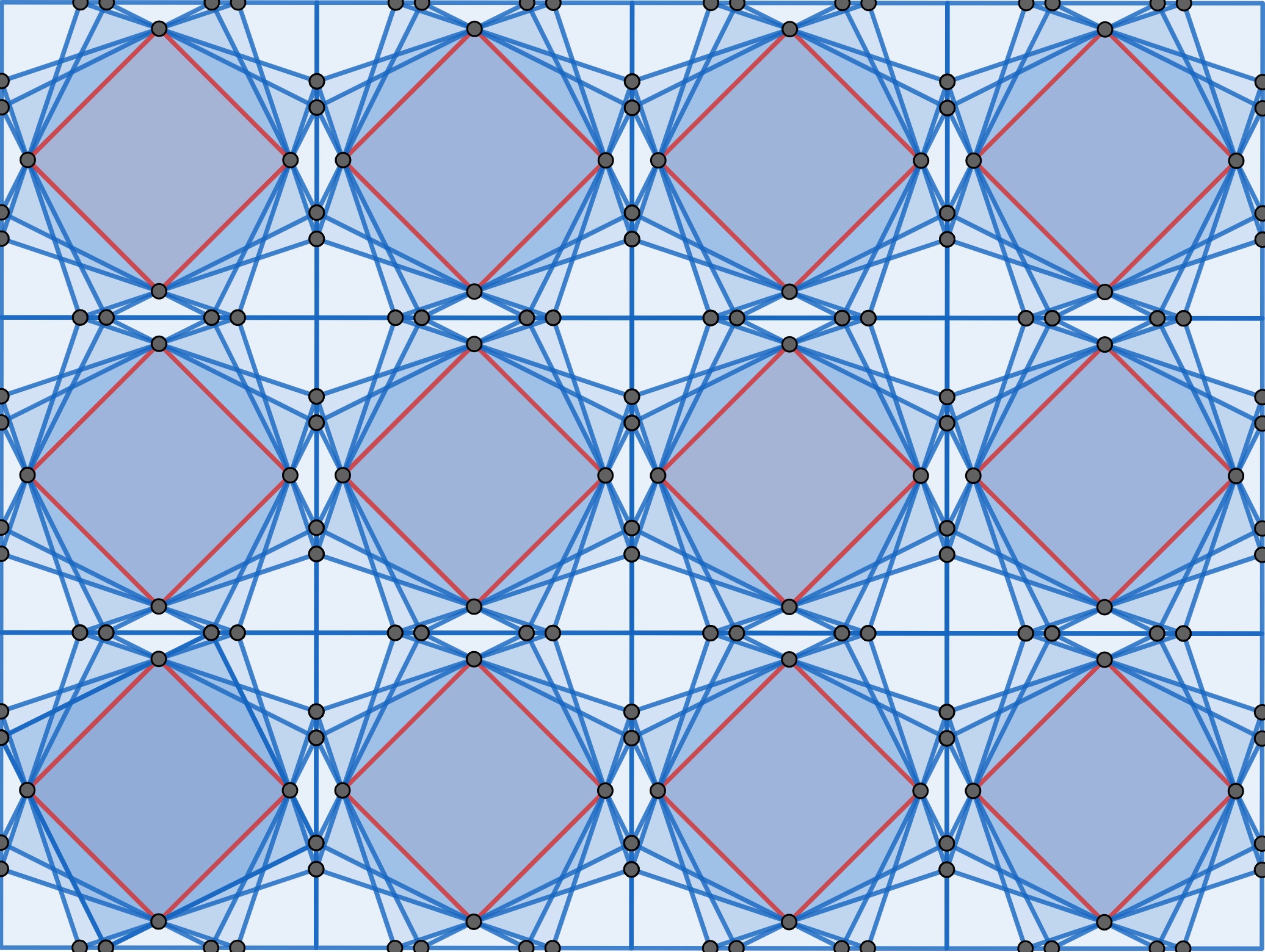}
  \captionof{figure}{Even number of faces meeting at each vertex give an example.}
  \label{fig21}
\end{minipage}

\end{figure}

\section{Conclusion}
In the paper the question about the maximum of the number of graphs inscribed into the graph of a convex polyhedron and circumscribed about another graph, is discussed. The maximal number of such graphs is shown to be 4. Constructible examples of these 4 graphs (convex polygons) in the case of regular icosahedron (regular polygon) are given. The special problem for convex polygons on a plane is also solved. The connection with projective geometry, generalized Maclaurin-Braikenridge's conic generation method and its new proof based on mathematical induction are also included.

\section*{Acknowledgments}

\section{Declarations}
\textbf{Ethical Approval.}
Not applicable.
 \newline \textbf{Competing interests.}
None.
  \newline \textbf{Authors' contributions.} 
Not applicable.
  \newline \textbf{Funding.}
This work was completed with the support of ADA University Faculty Research and Development Fund.
  \newline \textbf{Availability of data and materials.}
Not applicable


\begin{thebibliography}{1}

\normalsize
\baselineskip=17pt
 {\footnotesize

\bibitem{avk} E.A. Avksentyev, The Great Emch Closure Theorem and a combinatorial proof of Poncelet's Theorem, Mat. Sb., 206:11 (2015), 3–18; Sb. Math., 206:11 (2015), 1509–1523. \url{ https://doi.org/10.1070/SM2015v206n11ABEH004503}

\bibitem{bauer} Th. Bauer, W. Barth,
Poncelet theorems.
Exposition. Math. 14, (1996) 125-144.

\bibitem{bottema} O. Bottema, Ein Schliessungssatz für zwei Kreise, Elem. Math. 20:1 (1965), 1–7.

\bibitem{braik} W. Braikenridge, Exercitatio Geometrica de Descriptione Curvarum, London, 1733.

\bibitem{brian} M. Brianchon, Solution de plusieurs problèmes de géométrie, Journal de l'École polytechnique, IX Cahier., Tome IV, 1810, 1-15. \url{https://gallica.bnf.fr/ark:/12148/bpt6k433667x/f2.item}

\bibitem{centina} A. Del Centina, Poncelet’s porism: a long story of renewed discoveries, I. Arch. Hist. Exact Sci. 70, 1–122 (2016). \url{https://doi.org/10.1007/s00407-015-0163-y}

\bibitem{centina2} A. Del Centina, Poncelet’s porism: a long story of renewed discoveries, II. Arch. Hist. Exact Sci. 70, 123–173 (2016). \url{https://doi.org/10.1007/s00407-015-0164-x}

\bibitem{coxeter0} 
Coxeter, H.S.M. Interlocking Rings of Spheres. Scripta Math. 18, 113-121, 1952.

\bibitem{coxeter} H.S.M. Coxeter, S.L. Greitzer,  Geometry revisited, Series:
New mathematical library 19, Mathematical Assoc. of America, 2008.

\bibitem{coxeter1} 
Coxeter, H.S.M. Regular Polytopes, (3rd edition, 1973), Dover edition.

\bibitem{coxeter2} H.S.M. Coxeter, Projective Geometry,
Springer, New York, 1987.

\bibitem{crom} P.R. Cromwell, Polyhedra, Cambridge University Press; 1st edition (1999).

\bibitem{csikos} B. Csikós, A. Hraskó, Remarks On The Zig-zag Theorem, Periodica Mathematica Hungarica 39, (2000) 201–211. \url{https://doi.org/10.1023/A:1004811527911}

\bibitem{drag} V. Dragović, M. Radnović, Poncelet Porisms and Beyond, Integrable Billiards, Hyperelliptic Jacobians and Pencils of Quadrics, Birkhäuser, Basel, 2011. \url{https://doi.org/10.1007/978-3-0348-0015-0}

\bibitem{emch} A. Emch, An Application of Elliptic Functions to Peaucellier’s Link-Work (Inversor). Annals of Mathematics, 2(1/4), (1900) 60–63. \url{https://doi.org/10.2307/2007182}

\bibitem{fejes} L. Fejes Tóth, G. Fejes Tóth, W. Kuperberg, Lagerungen
Arrangements in the Plane, on the Sphere, and in Space, Grundlehren der mathematischen Wissenschaften Series 360, Springer, 2023. \url{https://doi.org/10.1007/978-3-031-21800-2}

\bibitem{grun} Grünbaum, Branko. Convex Polytopes, Graduate Texts in Mathematics, vol. 221 (2nd ed.), Springer-Verlag
(2003).

\bibitem{gur} G.B. Gurevich, Projective geometry, Fizmatgiz, Moscow (1960) (in Russian).

\bibitem{halb} L. Halbeisen, N. Hungerbühler. (2015). A Simple Proof of Poncelet’s Theorem (on the Occasion of Its Bicentennial). The American Mathematical Monthly, 122(6), 537–551. \url{https://doi.org/10.4169/amer.math.monthly.122.6.537}

\bibitem{hras} A. Hraskó, Poncelet-type Problems, an Elementary Approach. Elem. Math. 55 (2000), no. 2, 45–62. \url{https://doi.org/10.1007/s000170050071}

\bibitem{klein} Klein, F., Rosemann, W., Vorlesungen über nicht-euklidische Geometrie, Springer, Berlin 1968. \url{http://resolver.sub.uni-goettingen.de/purl?PPN375534636}

\bibitem{koll} L. Kollros, Quelques théorèmes de géométrie. Commentarii mathematici Helvetici 11 (1938/39): 37-48. \url{http://eudml.org/doc/138715}

\bibitem{mac} C. MacLaurin, A Letter from Mr. Colin Mac Laurin, Math. Prof. Edinburg.
F.R.S. to Mr. John Machin, Ast. Prof. Gresh. \& Secr. R.S. concerning the
Description of Curve Lines', Phil. Trans. 39 (1735-36), 143-165.

\bibitem{mills} S. Mills, Note on the Braikenridge-Maclaurin Theorem. Notes and Records of the Royal Society of London, 38(2), (1984) 235–240. \url{http://www.jstor.org/stable/531819}

\bibitem{pamfil} P. Pamfilos, Circles Associated To Pairs Of Conjugate Diameters, International Journal Of Geometry,
Vol. 7 (2018), No. 1, 21 - 36.

\bibitem{pamfil2} P. Pamfilos, On the Newton Line of a Quadrilateral, Forum Geometricorum, Vol. 9 (2009) 81–98.

\bibitem{pedoe} D. Pedoe, Geometry: A Comprehensive Course, Dover, New York, 1988.

\bibitem{poncelet} J.V. Poncelet, Applications d’ analyse et de geometrie, Gauthier-Villars, in 2 vol. 1864 (1964).

\noindent Tome I: \url{https://gallica.bnf.fr/ark:/12148/bpt6k90213q/f324.item.texteImage}

\noindent Tome II: \url{https://gallica.bnf.fr/ark:/12148/bpt6k97644068/f24.item.texteImage}

\bibitem{poncelet2} J.V. Poncelet, Traité des propriétés projectives des figures,
Gauthier-Villars, Paris, 1865. \url{https://gallica.bnf.fr/ark:/12148/bpt6k9608143v/}

\bibitem{protasov} V.Yu. Protasov, A generalization of Poncelet’s theorem,
Russian Mathematical Surveys, 2006, Volume 61, Issue 6,
1180–1182. \url{https://doi.org/10.1070/RM2006v061n06ABEH004375}
 
\bibitem{salmon} G. Salmon, A treatise on conic sections, Longmans, London, 1855.

\bibitem{tabach} S. Tabachnikov, Going in Circles: Variations on the Money-Coutts Theorem. Geometriae Dedicata 80, 201–209 (2000). \url{https://doi.org/10.1023/A:1005204813246}

\bibitem{turnbull} H.W. Turnbull, Colin Maclaurin, The American Mathematical Monthly, 54:6, (1947) 318-322. \url{https://doi.org/10.1080/00029890.1947.11991846}

\bibitem{tweddle} I. Tweddle, The Prickly Genius: Colin MacLaurin (1698-1746). The Mathematical Gazette, 82(495), (1998) 373–378. \url{https://doi.org/10.2307/3619883}

\bibitem{wylie} C. R. Wylie, Some Remarks on Maclaurin's Conic Construction, Mathematics Magazine, 1968, Vol. 41, No. 5, 234-242. \url{https://www.jstor.org/stable/2688802}}

\end{thebibliography}
\end{document}